\documentclass[11pt,reqno]{amsart}

\usepackage{amssymb}

\usepackage{amsfonts}

\usepackage{amsmath}

\usepackage[all]{xy}

\usepackage{color}

\newtheorem{theorem}{Theorem}[section]

\newtheorem{lemma}[theorem]{Lemma}

\newtheorem{proposition}[theorem]{Proposition}

\newtheorem{Definition}[theorem]{Definition}

\newtheorem{Example}[theorem]{Example}

\newtheorem{Remark}[theorem]{Remark}

\newenvironment{remark}{\begin{Remark}\begin{em}}{\end{em}\end{Remark}}

\newenvironment{definition}{\begin{Definition}\begin{em}}{\end{em}\end{Definition}}

\DeclareMathOperator{\tr}{tr}

\setlength{\oddsidemargin}{15pt} \setlength{\evensidemargin}{15pt}

\setlength{\textwidth}{6in}

\address{Sejong Kim \\ Department of Mathematics, Chungbuk National University, Cheongju 28644, Korea}
\email{skim@chungbuk.ac.kr}

\begin{document}

\author[Sejong Kim]{Sejong Kim}

\title[Parameterized Wasserstein mean with its properties]{Parameterized Wasserstein mean with its properties}

\date{}
\maketitle

\begin{abstract}
A new least squares mean of positive definite matrices for the divergence associated with the sandwiched quasi-relative entropy has been introduced. It generalizes the well-known Wasserstein mean for covariance matrices of Gaussian distributions with mean zero, so we call it the parameterized Wasserstein mean. We investigate in this article norm inequality of the parameterized Wasserstein mean, give its bounds with respect to the Loewner order, and show the extended version of Lie-Trotter-Kato formula for the parameterized Wasserstein mean. Finally we show the log-majorzation properties of the parameterized Wasserstein mean by using the Cartan mean.

\vspace{5mm}


\noindent {\bf Keywords}: parameterized Wasserstein mean, Cartan mean, sandwiched quasi-relative entropy, log-majorization
\end{abstract}

\section{Introduction}

The Fr\'{e}chet mean (or barycenter) is a natural average arising from the least squares mean when the space has a metric structure. On the other hand, it is not easy to know whether the Fr\'{e}chet mean exists on a metric space. It has been known from \cite{St03}, in general, that the Fr\'{e}chet mean exists uniquely on the Hadamard space, which is the complete metric space satisfying the semi-parallelogram law. A typical and important example of the Hadamard space is the open convex cone $\mathbb{P}_{m}$ of $m \times m$ positive definite matrices equipped with the Riemannian trace metric $\delta(A, B) = \Vert \log A^{-1/2} B A^{-1/2} \Vert_{2}$. For an $n$-tuple $\mathbb{A} = (A_{1}, \dots, A_{n}) \in \mathbb{P}_{m}^{n}$ of positive definite matrices and a positive probability vector $\omega = (w_{1}, \dots, w_{n})$ the Fr\'{e}chet mean (also called the Cartan mean, Karcher mean)
\begin{displaymath}
G(\omega; \mathbb{A}) = \underset{X \in \mathbb{P}_{m}}{\arg \min} \sum_{j=1}^{n} w_{j} \delta^{2}(X, A_{j}),
\end{displaymath}
has been widely studied in theoretical and computational aspects: see \cite{Ka77, LL0, LL1, LK, LP, Ya}.

Especially the Wasserstein metric space of probability measures with barycenters has been recently important in a variety of research fields: see \cite{AGS, Vi08, Vi03} and their bibliographies. There are several interesting results about Wasserstein barycenters on the set $\mathcal{P}^{2}(\mathbb{R}^{n})$ of all probability measures on the Euclidean space $\mathbb{R}^{n}$ with finite second moment \cite{AC, ABCM, Gel}, including the fixed point approach to the Wasserstein mean of Gaussian distributions. For $\mu, \nu \in \mathcal{P}^{2}(\mathbb{R}^{n})$ the $L_{2}$-Wasserstein metric is defined as
\begin{displaymath}
W_{2}(\mu, \nu) := \left\{ \underset{\pi \in \Pi (\mu, \nu)}{\inf} \int_{\mathbb{R}^{n}} \Vert x-y \Vert^{2} d\pi(x,y) \right\}^{1/2},
\end{displaymath}
where $\Pi (\mu, \nu)$ denotes the set of all couplings on $\mathbb{R}^{n} \times \mathbb{R}^{n}$ with marginals $\mu$ and $\nu$. In particular, the $L_{2}$-Wasserstein distance for two Gaussian distributions $\mu$ and $\nu$ with mean $0$ and covariance matrices $A, B$ is formulated as
\begin{displaymath}
\frac{1}{\sqrt{2}} W_{2}(\mu, \nu) = \left[ \tr \left( \frac{A + B}{2} \right) - \tr (A^{1/2} B A^{1/2})^{1/2} \right]^{1/2},
\end{displaymath}
where we consider that $A$ and $B$ are $m \times m$ positive definite matrices. Note that this metric, denoted as $d(A, B)$ and called the Bures-Wasserstein distance, coincides with the Bures distance of density matrices in quantum information theory and is the matrix version of the Hellinger distance of probability vectors.

For given $n$-tuple $\mathbb{A} = (A_{1}, \dots, A_{n}) \in \mathbb{P}_{m}^{n}$ and a positive probability vector $\omega = (w_{1}, \dots, w_{n})$ the Wasserstein mean is the least squares mean for the Bures-Wasserstein distance:
\begin{displaymath}
\Omega(\omega; \mathbb{A}) = \underset{X \in \mathbb{P}_{m}}{\arg \min} \sum_{j=1}^{n} w_{j} d^{2}(X, A_{j}).
\end{displaymath}
It has been shown that such a minimizer exists uniquely by using non-smooth analysis, convex duality and the theory of optimal transport \cite{AC} and by using matrix analysis \cite{BJL18}. Moreover, lots of interesting properties for the Wasserstein mean of positive definite matrices have been established: an iteration approach to the Wasserstein mean using the optimal transport map \cite{ABCM}, a log-majorization property of the Wasserstein mean \cite{BJL}, and several inequalities (in terms of Loewner order and operator norm) and an extended version of Lie-Trotter-Kato formula for the Wasserstein mean \cite{HK19}.

In recent works the sandwiched quasi-relative entropy as a parameterized version of fidelity has been introduced in \cite{FL, WWY}:
\begin{displaymath}
F_{t} (A, B) = \tr \left( A^{\frac{1-t}{2t}} B A^{\frac{1-t}{2t}} \right)^{t}, \ t \in (0, \infty).
\end{displaymath}
Note that the usual fidelity is the case $t = 1/2$ and it is a variant of the relative R\'{e}nyi entropy. Furthermore, it has been shown in \cite{BJL19} that the sandwiched quasi-relative entropy $F_{t}$ is strictly concave and the following minimization problem
\begin{displaymath}
\underset{X \in \mathbb{P}_{m}}{\arg \min} \sum_{j=1}^{n} w_{j} \left[ \tr ((1-t) A_{j} + t X) - F_{t}(A_{j}, X) \right].
\end{displaymath}
has a unique solution by Brouwer's fixed point theorem. So it generalizes the Wasserstein mean for $t = 1/2$, and we call it the \emph{parameterized Wasserstein mean}. In this paper we investigate norm inequality of the parameterized Wasserstein mean, give bounds of the parameterized Wasserstein mean with respect to the Loewner order, and show that the parameterized Wasserstein mean satisfies the extended version of Lie-Trotter-Kato formula.  Finally, we show the log-majorzation property of parameterized Wasserstein mean by using the Cartan mean.

\section{Symmetric weighted geometric mean}

Let $\mathbb{H}_{m}$ be the real vector space of all $m \times m$ Hermitian matrices.
Let $\mathbb{P}_{m} \subset \mathbb{H}_{m}$ be the open convex cone of all $m \times m$ positive definite matrices. The general linear group $GL_{m}$ of all $m \times m$ invertible matrices acts on $\mathbb{P}_{m}$ via congruence transformations $\Gamma_{M}(X) = M X M^{*}$ for $M \in GL_{m}$ and $X \in \mathbb{P}_{m}$. For any $A, B \in \mathbb{H}_{m}$ we write $A \leq B$ if $B - A$ is positive semi-definite, and $A < B$ if $B - A$ is positive definite. This is indeed a partial order on $\mathbb{H}_{m}$, known as the Loewner order.

Let $\Delta_{n}$ be the simplex of positive probability vectors in $\mathbb{R}^{n}$ convexly spanned by the unit coordinate vectors. Let $\mathbb{A} = (A_{1}, \dots, A_{n}) \in \mathbb{P}_{m}^{n}$, $\omega = (w_{1}, \dots, w_{n}) \in \Delta_{n}$, $\sigma \in S^{n}$ a permutation on $n$-letters, and $M \in GL_{m}$. For convenience, we denote as
\begin{displaymath}
\begin{split}
\omega_{\sigma} & := (w_{\sigma(1)}, \dots, w_{\sigma(n)}) \in \Delta_{n} \\
\mathbb{A}_{\sigma} & := (A_{\sigma(1)}, \dots, A_{\sigma(n)}) \in \mathbb{P}_{m}^{n} \\
M \mathbb{A} M^{*} & := (M A_{1} M^{*}, \dots, M A_{n} M^{*}) \in \mathbb{P}_{m}^{n} \\
\mathbb{A}^{-1} & := (A_{1}^{-1}, \dots, A_{n}^{-1}) \in \mathbb{P}_{m}^{n}.
\end{split}
\end{displaymath}

\begin{definition}
We define a \emph{symmetric weighted geometric mean} of positive definite matrices to be a map $\mathfrak{M}: \Delta_{n} \times \mathbb{P}_{m}^{n} \to \mathbb{P}_{m}$ that satisfies the following properties:
For $\mathbb{A} = (A_{1}, \dots, A_{n}), \mathbb{B} = (B_{1}, \dots, B_{n}) \in \mathbb{P}_{m}^{n}$, $\omega = (w_{1}, \dots, w_{n}) \in \Delta_{n}$, $\sigma \in S^{n}$, $M \in GL_{m}$, and $\mathbf{a} = (a_{1}, \dots, a_{n}) \in \mathbb{R}_{++}^{n}$, where $\mathbb{R}_{++} := (0, \infty)$, these are
\begin{itemize}
\item[(P1)] (Consistency with scalars)
$\mathfrak{M}(\omega; \mathbb{A}) = A_{1}^{w_{1}} \cdots A_{n}^{w_{n}}$ if the $A_{i}$'s commute;
\item[(P2)] (Joint homogeneity)
$\mathfrak{M}(\omega; a_{1} A_{1}, \dots, a_{n} A_{n}) = a_{1}^{w_{1}} \cdots a_{n}^{w_{n}} \mathfrak{M}(\omega; \mathbb{A})$;
\item[(P3)] (Permutation invariance)
$\mathfrak{M}(\omega_{\sigma}; \mathbb{A}_{\sigma}) = \mathfrak{M}(\omega; \mathbb{A})$;
\item[(P4)] (Monotonicity)
If $B_{i} \leq A_{i}$ for all $1 \leq i \leq n$, then $\mathfrak{M}(\omega; \mathbb{B}) \leq \mathfrak{M}(\omega; \mathbb{A})$;
\item[(P5)] (Continuity)
The map $\mathfrak{M}(\omega; \cdot)$ is continuous;
\item[(P6)] (Congruence invariance)
$\mathfrak{M}(\omega; M \mathbb{A} M^{*}) = M \mathfrak{M}(\omega; \mathbb{A}) M^{*}$;
\item[(P7)] (Joint concavity)
$\mathfrak{M}(\omega; \lambda \mathbb{A} + (1-\lambda) \mathbb{B}) \geq \lambda \mathfrak{M}(\omega; \mathbb{A}) + (1-\lambda) \mathfrak{M}(\omega; \mathbb{B})$ for $0 \leq \lambda \leq 1$;
\item[(P8)] (Self-duality)
$\mathfrak{M}(\omega; \mathbb{A}^{-1})^{-1} = \mathfrak{M}(\omega; \mathbb{A})$;
\item[(P9)] (Determinantal identity)
$\displaystyle \det \mathfrak{M}(\omega; \mathbb{A}) = \prod_{i=1}^{n} (\det A_{i})^{w_{i}}$;
\item[(P10)] (Arithmetic-Geometric-Harmonic weighted mean inequalities)
$$ \displaystyle \mathcal{H}(\omega; \mathbb{A}) := \left( \sum_{i=1}^{n} w_{i} A_{i}^{-1} \right)^{-1} \leq \mathfrak{M}(\omega; \mathbb{A}) \leq \sum_{i=1}^{n} w_{i} A_{i} =: \mathcal{A}(\omega; \mathbb{A}). $$
\end{itemize}
A map $\mathfrak{M}$ satisfying (P1)-(P10) except (P3) is called a (asymmetric) \emph{weighted geometric mean}.
\end{definition}

Note that the two-variable weighted geometric mean
\begin{displaymath}
\mathfrak{M}(w_{1}, w_{2}; A, B) = A^{1/2} (A^{-1/2} B A^{-1/2})^{w_{2}} A^{1/2} =: A \#_{w_{2}} B
\end{displaymath}
is uniquely determined by (P1) and (P6), and also fulfils (P1)-(P10). Moreover, the two-variable weighted geometric mean $A \#_{w_{2}} B$ is the unique (up to parameterization) geodesic on the Hadamard space $\mathbb{P}_{m}$ with the Riemannian trace metric.

There are many different kinds of symmetric weighted geometric means on the open convex cone $\mathbb{P}_{m}$ including the Ando-Li-Mathias (ALM) mean \cite{ALM} and Bini-Meini-Poloni (BMP) mean \cite{BMP}. Among them a natural and canonical mean is the least squares mean, called the \emph{Cartan mean}, which is the unique minimizer of the weighted sum of squares of the Riemannian trace metric $\delta$:
\begin{equation} \label{E:Cartan}
G(\omega; A_{1}, \ldots, A_{n}) = \underset{X \in \mathbb{P}_{m}} {\arg\min} \sum_{i=1}^{n} w_{i} \delta^{2}(X, A_{i}).
\end{equation}
In \cite{LL0}, Lawson and Lim verified that the Cartan mean $G$ satisfies all the properties (P1)-(P10). Computing appropriate derivatives as in \cite{Bh} yields that the Cartan mean $G(\omega; \mathbb{A})$ coincides with the unique solution $X \in \mathbb{P}_{m}$ of the Karcher equation
\begin{equation} \label{E:Karcher}
\sum_{i=1}^{n} w_{i} \log (X^{-1/2} A_{i} X^{-1/2}) = O.
\end{equation}
Recently, Yamazaki \cite{Ya} has shown a unique characterization of the Cartan mean among other symmetric weighted geometric means, and its generalization to the probability measures with finite second moment for the Riemannian trace metric has been proved in \cite{KLL}.

\begin{theorem} \label{T:Yamazaki} \cite{KLL,Ya}
Let the map $\mathfrak{M}: \Delta_{n} \times \mathbb{P}_{m}^{n} \to \mathbb{P}_{m}$ be the symmetric weighted geometric mean satisfying
\begin{equation} \label{E:Yamazaki}
\sum_{j=1}^{n} w_{j} \log A_{j} \leq 0 \ \Longrightarrow \ \mathfrak{M}(\omega; \mathbb{A}) \leq I
\end{equation}
for any $\mathbb{A} = (A_{1}, \dots, A_{n}) \in \mathbb{P}_{m}^{n}$ and $\omega = (w_{1}, \dots, w_{n}) \in \Delta_{n}$. Then $\mathfrak{M} = G$. Furthermore, the Cartan mean $G$ satisfies the property \eqref{E:Yamazaki}.
\end{theorem}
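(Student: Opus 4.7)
The plan is to split the theorem into two parts and handle them separately. Part one (uniqueness): any symmetric weighted geometric mean $\mathfrak{M}$ satisfying \eqref{E:Yamazaki} must coincide with $G$. Part two: $G$ itself satisfies \eqref{E:Yamazaki}. The first part is essentially algebraic and the main tool will be the Karcher equation \eqref{E:Karcher}, pulled back to the identity via congruence invariance (P6) and self-duality (P8); the second part requires more analytic input about the Cartan mean.

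For part one, fix $(\omega,\mathbb{A})\in\Delta_{n}\times\mathbb{P}_{m}^{n}$ and set $X:=G(\omega;\mathbb{A})$ and $Y:=\mathfrak{M}(\omega;\mathbb{A})$. Introduce the ``centered'' tuple $B_{i}:=X^{-1/2}A_{i}X^{-1/2}$. The Karcher equation \eqref{E:Karcher} says exactly $\sum_{i}w_{i}\log B_{i}=0$, which is in particular $\leq 0$; hypothesis \eqref{E:Yamazaki} then forces $\mathfrak{M}(\omega;\mathbb{B})\leq I$, and (P6) rewrites the left-hand side as $X^{-1/2}YX^{-1/2}$, so $Y\leq X$. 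For the opposite direction I would rerun the same argument on $\mathbb{B}^{-1}=(X^{1/2}A_{i}^{-1}X^{1/2})_{i}$: since $\log B_{i}^{-1}=-\log B_{i}$, the weighted log-sum is again zero, so \eqref{E:Yamazaki} gives $\mathfrak{M}(\omega;\mathbb{B}^{-1})\leq I$. Combining (P6) with self-duality (P8) turns this into $X^{1/2}Y^{-1}X^{1/2}\leq I$, i.e., $X\leq Y$. These two inequalities together force $X=Y$, so $\mathfrak{M}=G$.

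For part two, I would assume $\sum_{i}w_{i}\log A_{i}\leq 0$, set $X:=G(\omega;\mathbb{A})$, and aim to deduce $X\leq I$. Two observations are essentially free. The equality case $\sum_{i}w_{i}\log A_{i}=0$ is easy: then $X=I$ satisfies the Karcher equation \eqref{E:Karcher}, so by uniqueness of Karcher solutions $X=I$. The scalar consequence $\det X\leq 1$ is also immediate from (P9). The real difficulty is bridging from the equality case and from determinantal information to the full Loewner inequality $X\leq I$ when the hypothesis is a proper inequality.

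This bridging is the main obstacle. The cleanest route I would attempt is to construct a pointwise dominating tuple $\widetilde{\mathbb{A}}\geq\mathbb{A}$ with $\sum_{i}w_{i}\log\widetilde{A}_{i}=0$, because then the equality case gives $G(\omega;\widetilde{\mathbb{A}})=I$ and monotonicity (P4) closes the argument via $G(\omega;\mathbb{A})\leq G(\omega;\widetilde{\mathbb{A}})=I$. The construction of $\widetilde{\mathbb{A}}$ is however genuinely non-trivial, since a scalar dilation $A_{i}\mapsto cA_{i}$ cannot correct $\sum_{i}w_{i}\log A_{i}$ to zero when the latter is not a scalar matrix, and a power dilation $A_{i}\mapsto A_{i}^{1+s_{i}}$ would require $s_{i}\geq 0$ that are incompatible with the sign of $\sum_{i}w_{i}\log A_{i}$. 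In practice I would defer to the variational argument of \cite{Ya,KLL}, which exploits strict convexity of the Cartan functional $F(Y):=\sum_{i}w_{i}\delta^{2}(Y,A_{i})$ and the monotone dependence of its minimizer on Loewner-ordered perturbations of the data to produce the desired inequality directly.
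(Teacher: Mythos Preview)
The paper does not prove this theorem at all; it is quoted from the references \cite{KLL,Ya} and used as a black box later in Section~5. So there is no ``paper's own proof'' to compare against, and I will assess your proposal on its merits.

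Your Part~1 is correct and is exactly the standard argument (essentially Yamazaki's): center at $X=G(\omega;\mathbb{A})$ via the Karcher equation so that $\sum_i w_i\log B_i=0$, then apply the hypothesis \eqref{E:Yamazaki} once to $\mathbb{B}$ and once to $\mathbb{B}^{-1}$, and translate back using (P6) and (P8). Nothing is missing there.

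Part~2 is where the content lies, and your proposal does not actually supply a proof. You correctly dispose of the equality case via uniqueness in the Karcher equation, and you correctly identify that the dominating-tuple construction $\widetilde{\mathbb{A}}\geq\mathbb{A}$ with $\sum_i w_i\log\widetilde{A}_i=0$ is the obstacle: neither scalar nor power dilations work, and in fact no such pointwise construction is known to close the argument by monotonicity alone. Your final paragraph then defers to \cite{Ya,KLL} without saying what mechanism they use, which leaves a genuine gap. For the record, the argument in those references is \emph{not} a variational/strict-convexity argument of the type you sketch; it runs through an Ando--Hiai type order inequality for the Cartan mean (roughly, $G(\omega;\mathbb{A})\leq I$ is equivalent to $G(\omega;A_1^{p},\dots,A_n^{p})\leq I$ for $p\geq 1$, and a monotone version thereof), coupled with the Lie--Trotter limit $\lim_{p\to 0^{+}} G(\omega;A_1^{p},\dots,A_n^{p})^{1/p}=\exp\bigl(\sum_j w_j\log A_j\bigr)$. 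That combination is what converts the hypothesis $\sum_j w_j\log A_j\leq 0$ into the Loewner conclusion $G(\omega;\mathbb{A})\leq I$. If you want a self-contained write-up, that is the missing ingredient you need to supply.
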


\section{Parameterized Wasserstein means}

Let $\mathbb{A} = (A_{1}, \dots, A_{n}) \in \mathbb{P}_{m}^{n}$, and let $\omega = (w_{1}, \dots, w_{n}) \in \Delta_{n}$. For any $t \in (0,1)$ the following minimization problem
\begin{equation} \label{E:minimization}
\underset{X \in \mathbb{P}_{m}}{\arg \min} \sum_{j=1}^{n} w_{j} \left[ \tr ((1-t) A_{j} + t X) - \tr \left( A_{j}^{\frac{1-t}{2t}} X A_{j}^{\frac{1-t}{2t}} \right)^{t} \right]
\end{equation}
has been solved in \cite{BJL19}, so it gives us a new multivariate matrix mean. We recall its known results in this section, and investigate more interesting consequences in the later sections.

Note that the quantity $\displaystyle F_{t}(A_{j}, X) = \tr \left( A_{j}^{\frac{1-t}{2t}} X A_{j}^{\frac{1-t}{2t}} \right)^{t}$, called the \emph{sandwiched quasi-relative entropy}, is a parameterized version of fidelity since $F_{\frac{1}{2}}$ is the usual fidelity. Furthermore, the objective function $\displaystyle \varphi_{t} (X) = \sum_{j=1}^{n} w_{j} \left[ \tr ((1-t) A_{j} + t X) - \tr \left( A_{j}^{\frac{1-t}{2t}} X A_{j}^{\frac{1-t}{2t}} \right)^{t} \right]$ is strictly convex and its gradient is given by
\begin{displaymath}
\nabla \varphi_{t} (X) = t \left[ I - \sum_{j=1}^{n} w_{j} \left( A_{j}^{\frac{1-t}{t}} \#_{1-t} X^{-1} \right) \right].
\end{displaymath}
To prove the existence and uniqueness of the minimization problem \eqref{E:minimization}, it is enough to show that the equation $\nabla \varphi_{t} (X) = 0$ has a positive definite solution. Note that
\begin{equation} \label{E:equation}
\nabla \varphi_{t} (X) = 0 \ \Longleftrightarrow \ X = \sum_{j=1}^{n} w_{j} \left( X^{1/2} A_{j}^{\frac{1-t}{t}} X^{1/2} \right)^{t}.
\end{equation}
It has been shown in \cite{BJL19} that the map $H: \mathbb{P}_{m} \to \mathbb{P}_{m}$ defined by $\displaystyle H(X) = \sum_{j=1}^{n} w_{j} \left( X^{1/2} A_{j}^{\frac{1-t}{t}} X^{1/2} \right)^{t}$ is a self-map on the closed interval $[\alpha I, \beta I] := \{ X \in \mathbb{H}_{m}: \alpha I \leq X \leq \beta I \}$, where
\begin{center}
$\displaystyle \alpha := \underset{1 \leq i \leq n}{\min} \lambda_{\min} (A_{j})$ \ and \ $\displaystyle \beta := \underset{1 \leq i \leq n}{\max} \lambda_{\max} (A_{j})$.
\end{center}
We denote as $\lambda_{\min} (A)$ and $\lambda_{\max} (A)$ the smallest and largest eigenvalues of $A$, respectively. By Brouwer's fixed point theorem, the map $H$ has a fixed point. This yields the existence and uniqueness of the minimizer of \eqref{E:minimization}.

\begin{definition}
Let $\mathbb{A} = (A_{1}, \dots, A_{n}) \in \mathbb{P}_{m}^{n}$ and $\omega = (w_{1}, \dots, w_{n}) \in \Delta_{n}$. For $t \in (0,1)$, the \emph{parameterized Wasserstein mean} $\Omega_{t}(\omega; \mathbb{A})$ is defined as
\begin{displaymath}
\Omega_{t}(\omega; \mathbb{A}) = \underset{X \in \mathbb{P}_{m}}{\arg \min} \sum_{j=1}^{n} w_{j} \left[ \tr ((1-t) A_{j} + t X) - \tr \left( A_{j}^{\frac{1-t}{2t}} X A_{j}^{\frac{1-t}{2t}} \right)^{t} \right].
\end{displaymath}
\end{definition}

\begin{theorem} \label{T:Para-Wass-Eq}
The parameterized Wasserstein mean $\Omega_{t}(\omega; \mathbb{A})$ is the unique positive definite matrix $X \in \mathbb{P}_{m}$ satisfying that
\begin{equation} \label{E:Para-Wass-Eq}
\sum_{j=1}^{n} w_{j} \left( A_{j}^{\frac{1-t}{t}} \#_{1-t} X^{-1} \right) = I,
\end{equation}
equivalently,
\begin{displaymath}
X = \sum_{j=1}^{n} w_{j} \left( X^{1/2} A_{j}^{\frac{1-t}{t}} X^{1/2} \right)^{t}.
\end{displaymath}
\end{theorem}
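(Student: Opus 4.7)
The plan is to combine the three ingredients that have already been assembled in the preceding discussion: (i) the strict convexity of $\varphi_t$, (ii) the explicit formula for $\nabla \varphi_t$, and (iii) the Brouwer fixed-point argument applied to the map $H$. Since $\varphi_t$ is strictly convex on $\mathbb{P}_m$ and coercive (its minimizer is known to lie in $[\alpha I, \beta I]$), the minimizer $\Omega_t(\omega;\mathbb{A})$ is the unique critical point. Using the stated gradient
\[
\nabla \varphi_t(X) = t\Bigl[I - \sum_{j=1}^n w_j \bigl(A_j^{\frac{1-t}{t}} \#_{1-t} X^{-1}\bigr)\Bigr],
\]
the critical-point equation $\nabla \varphi_t(X)=0$ is precisely \eqref{E:Para-Wass-Eq}. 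Thus $\Omega_t(\omega;\mathbb{A})$ is characterized as the unique positive definite solution of \eqref{E:Para-Wass-Eq}, provided such a solution exists; existence follows directly from the Brouwer fixed point argument already recalled, since any fixed point of $H$ lies in $[\alpha I,\beta I]\subset \mathbb{P}_m$ and, as the equivalence below will show, is a solution of \eqref{E:Para-Wass-Eq}.

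Next I would verify the equivalence of \eqref{E:Para-Wass-Eq} with the fixed-point form
\[
X = \sum_{j=1}^n w_j \bigl(X^{1/2} A_j^{\frac{1-t}{t}} X^{1/2}\bigr)^t.
\]
The key tool is the congruence invariance of the weighted geometric mean, $M(A\#_s B)M^* = (MAM^*)\#_s(MBM^*)$, together with the identity $A \#_s I = A^{1-s}$ (which follows immediately from the definition $A\#_s B = A^{1/2}(A^{-1/2}BA^{-1/2})^s A^{1/2}$). Conjugating \eqref{E:Para-Wass-Eq} by $X^{1/2}$ converts each summand $A_j^{\frac{1-t}{t}}\#_{1-t} X^{-1}$ into $(X^{1/2}A_j^{\frac{1-t}{t}}X^{1/2})\#_{1-t} I = (X^{1/2}A_j^{\frac{1-t}{t}}X^{1/2})^t$, and turns the right-hand side $I$ into $X$. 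Reversing the conjugation gives the converse implication, so the two equations are equivalent on $\mathbb{P}_m$.

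There is no substantial obstacle here; the only step requiring care is the congruence-invariance computation, since the exponents $\frac{1-t}{t}$ and $\#_{1-t}$ must be tracked correctly so that the relation $t + (1-t) = 1$ produces exactly the exponent $t$ in the fixed-point form. Once that identity is in place, the theorem is a direct consequence of strict convexity of $\varphi_t$ (for uniqueness) and the Brouwer fixed-point theorem applied to $H$ on $[\alpha I,\beta I]$ (for existence), both of which are invoked from \cite{BJL19} in the discussion preceding the theorem.
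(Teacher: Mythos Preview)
Your proposal is correct and follows essentially the same approach as the paper: the paper does not give a separate formal proof of this theorem but rather presents it as a summary of the discussion preceding it (all attributed to \cite{BJL19}), namely strict convexity of $\varphi_t$, the explicit gradient formula, the equivalence \eqref{E:equation}, and the Brouwer fixed-point argument for $H$ on $[\alpha I,\beta I]$. Your write-up in fact supplies more detail than the paper on the congruence-invariance step behind the equivalence of the two equations, which the paper simply asserts.
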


For given $\mathbb{A} = (A_{1}, \dots, A_{n}) \in \mathbb{P}_{m}^{n}$ and $\omega = (w_{1}, \dots, w_{n}) \in \Delta_{n}$, we denote as
\begin{displaymath}
\begin{split}
\mathbb{A}^{p} & = (\underline{A_{1}, \dots, A_{n}}, \dots, \underline{A_{1}, \dots, A_{n}}) \in \mathbb{P}_{m}^{np}, \\
\omega^{p} & = \frac{1}{p} (\underline{w_{1}, \dots, w_{n}}, \dots, \underline{w_{1}, \dots, w_{n}}) \in \Delta_{np},
\end{split}
\end{displaymath}
where the number of blocks in the last expression is $p$.

The following are some properties of parameterized Wasserstein mean, compared with those of the Cartan mean.
\begin{theorem} \label{T:Para-Wass}
Properties of parameterized Wasserstein mean.
\begin{itemize}
\item[(1)] $($Consistency with scalars$)$ $\displaystyle \Omega_{t}(\omega; \mathbb{A}) = \left( \sum_{j=1}^{n} w_{j} A_{j}^{1-t} \right)^{\frac{1}{1-t}}$ if the $A_{j}$'s commute.

\item[(2)] $($Homogeneity$)$ $\displaystyle \Omega_{t}(\omega; \alpha \mathbb{A}) = \alpha \Omega_{t}(\omega; \mathbb{A})$ for any positive scalar $\alpha$.

\item[(3)] $($Permutation invariance$)$ $\displaystyle \Omega_{t}(\omega_{\sigma}; \mathbb{A}_{\sigma}) = \Omega_{t}(\omega; \mathbb{A})$ for any permutation $\sigma$ on $\{ 1, \dots, n \}$.

\item[(4)] $($Repetition invariance$)$ $\displaystyle \Omega_{t}(\omega^{p}; \mathbb{A}^{p}) = \Omega_{t}(\omega; \mathbb{A})$ for any $p \in \mathbb{N}$.

\item[(5)] $($Unitary congruence invariance$)$ $\displaystyle \Omega_{t}(\omega; U \mathbb{A} U^{*}) = U \Omega_{t}(\omega; \mathbb{A}) U^{*}$ for any unitary $U$.

\item[(6)] $($Determinantal inequality$)$ $\displaystyle \det \Omega_{t}(\omega; \mathbb{A}) \geq \prod_{j=1}^{n} (\det A_{j})^{w_{j}}$.
\end{itemize}
Moreover, $X = \Omega_{t}(\omega; A_{1}, \dots, A_{n-1}, X)$ if and only if $X = \Omega_{t}(\hat{\omega}; A_{1}, \dots, A_{n-1})$, where $\displaystyle \hat{\omega} = \frac{1}{1 - w_{n}} (w_{1}, \dots, w_{n-1}) \in \Delta_{n-1}$.
\end{theorem}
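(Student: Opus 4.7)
The plan is to drive every item through the fixed-point characterization of Theorem~\ref{T:Para-Wass-Eq}, substituting a candidate $X$ into either form of the defining equation and invoking uniqueness. For (1), when the $A_{j}$'s commute, plugging $X=\bigl(\sum_{j} w_{j} A_{j}^{1-t}\bigr)^{1/(1-t)}$ into $X=\sum_{j} w_{j}(X^{1/2}A_{j}^{(1-t)/t}X^{1/2})^{t}$ reduces, by simultaneous diagonalization, to $X=X^{t}\sum_{j}w_{j}A_{j}^{1-t}=X^{t}\cdot X^{1-t}$. For (2), I would use the two-variable scaling law $(\alpha A)\#_{s}(\beta B)=\alpha^{1-s}\beta^{s}(A\#_{s}B)$ to check that $\alpha\,\Omega_{t}(\omega;\mathbb{A})$ satisfies equation~\eqref{E:Para-Wass-Eq} for $\alpha\mathbb{A}$; the cumulative factor is $\alpha^{((1-t)/t)\cdot t}\alpha^{-(1-t)}=1$, which cancels cleanly. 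Items (3) and (4) are immediate symmetry and repetition observations on the sum in~\eqref{E:Para-Wass-Eq}. For (5), the identities $(UAU^{*})\#_{s}(UBU^{*})=U(A\#_{s}B)U^{*}$ and $(UAU^{*})^{r}=UA^{r}U^{*}$, valid for unitary $U$, show that $U\,\Omega_{t}(\omega;\mathbb{A})\,U^{*}$ solves the equation for $U\mathbb{A}U^{*}$, and uniqueness finishes.

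The substantive step is the determinantal inequality (6). I would start from the self-referential identity $X=\sum_{j}w_{j}(X^{1/2}A_{j}^{(1-t)/t}X^{1/2})^{t}$ with $X=\Omega_{t}(\omega;\mathbb{A})$ and apply Minkowski's determinantal inequality in its concave form $\det(\sum_{j}w_{j}Y_{j})^{1/m}\ge\sum_{j}w_{j}(\det Y_{j})^{1/m}$. Since $\det(X^{1/2}A_{j}^{(1-t)/t}X^{1/2})^{t}=(\det X)^{t}(\det A_{j})^{1-t}$, this yields
\[
(\det X)^{1/m}\ \ge\ (\det X)^{t/m}\sum_{j=1}^{n}w_{j}(\det A_{j})^{(1-t)/m}.
\]
Weighted AM-GM bounds the remaining sum below by $\prod_{j}(\det A_{j})^{(1-t)w_{j}/m}$, whence $(\det X)^{(1-t)/m}\ge\prod_{j}(\det A_{j})^{(1-t)w_{j}/m}$; raising to the power $m/(1-t)$ gives the claim. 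The care needed here is to apply Minkowski to the convex combination and to keep the factor $(1-t)$ aligned on both sides before cancellation.

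For the concluding self-referential equivalence, I would substitute $A_{n}=X$ into~\eqref{E:Para-Wass-Eq}. The $n$-th summand $w_{n}(X^{(1-t)/t}\#_{1-t}X^{-1})$ is a weighted geometric mean of two commuting positive definite matrices, hence collapses via (P1) to $w_{n}X^{1-t}X^{-(1-t)}=w_{n}I$. What remains is $\sum_{j=1}^{n-1}w_{j}(A_{j}^{(1-t)/t}\#_{1-t}X^{-1})=(1-w_{n})I$, which, upon dividing by $1-w_{n}$, is precisely the defining equation of $\Omega_{t}(\hat\omega;A_{1},\dots,A_{n-1})$; both implications follow from uniqueness. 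Overall the principal obstacle is (6), since items (1)-(5) and the final equivalence reduce to substitutions into~\eqref{E:Para-Wass-Eq} once the correct candidate is identified, whereas (6) requires the combined use of Minkowski's inequality and weighted AM-GM described above.
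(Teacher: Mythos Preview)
Your argument is correct throughout. Items (1)--(5) and the final self-referential equivalence proceed exactly as in the paper: substitute the candidate into the fixed-point equation~\eqref{E:Para-Wass-Eq} (or its equivalent form) and invoke uniqueness; the paper says only ``most of items can be proved by Theorem~\ref{T:Para-Wass-Eq}'' and writes out (1) in the same way you do.

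For (6) your route differs from the paper's. The paper bounds the convex combination $X=\sum_{j}w_{j}(X^{1/2}A_{j}^{(1-t)/t}X^{1/2})^{t}$ below by the Cartan mean (arithmetic--geometric inequality (P10)), then passes to determinants using monotonicity of $\det$ on $\mathbb{P}_m$ together with the determinantal identity (P9) for the Cartan mean, obtaining $\det X\ge(\det X)^{t}\prod_{j}(\det A_{j})^{(1-t)w_{j}}$ directly. Your approach instead applies Minkowski's inequality (concavity of $A\mapsto(\det A)^{1/m}$) followed by scalar weighted AM--GM. Both reach the same cancellation $(\det X)^{1-t}\ge\prod_{j}(\det A_{j})^{(1-t)w_{j}}$. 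Your version is more elementary in that it avoids the Cartan-mean machinery; the paper's version stays within the mean-theoretic framework of the article. A third variant, noted in the Remark following the theorem, uses strict concavity of $\log\det$ on the form $I=\sum_{j}w_{j}(A_{j}^{(1-t)/t}\#_{1-t}X^{-1})$; that route has the additional payoff of identifying the equality case $A_{i}=A_{j}$ for all $i,j$, which neither your Minkowski/AM--GM argument nor the paper's Cartan-mean argument isolates without further work.
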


\begin{proof}
Most of items can be proved by Theorem \ref{T:Para-Wass-Eq}, so we prove some.
\begin{itemize}
\item[(1)] Assume that all $A_{j}$'s commute, so they are simultaneously diagonalizable. Set $\displaystyle X = \left( \sum_{j=1}^{n} w_{j} A_{j}^{1-t} \right)^{\frac{1}{1-t}}$. Then $X$ also commutes with all the $A_{j}$'s, and is a solution of the equation \eqref{E:Para-Wass-Eq}. By uniqueness of the positive definite solution for the equation \eqref{E:Para-Wass-Eq}, $X = \Omega_{t}(\omega; \mathbb{A})$.

\item[(6)] Let $X = \Omega_{t}(\omega; \mathbb{A})$. Then $\displaystyle X = \sum_{j=1}^{n} w_{j} \left( X^{1/2} A_{j}^{\frac{1-t}{t}} X^{1/2} \right)^{t}$. By the arithmetic-Cartan mean inequality,
\begin{displaymath}
X \geq G \left( \omega; \left( X^{1/2} A_{1}^{\frac{1-t}{t}} X^{1/2} \right)^{t}, \dots, \left( X^{1/2} A_{n}^{\frac{1-t}{t}} X^{1/2} \right)^{t} \right).
\end{displaymath}
Applying Corollary 7.7.4 (e) in \cite{HJ} and the determinantal identity of Cartan mean, we have
\begin{displaymath}
\det X \geq \prod_{j=1}^{n} \det \left( X^{1/2} A_{j}^{\frac{1-t}{t}} X^{1/2} \right)^{t w_{j}} = (\det X)^{t} \prod_{j=1}^{n} (\det A_{j})^{(1-t) w_{j}}.
\end{displaymath}
Solving for $\det X$, we obtain the desired inequality.
\end{itemize}
\end{proof}

\begin{remark}
Using the strict concavity of the map $f: \mathbb{P}_{m} \to \mathbb{R}, \ f(A) = \log \det A$ in Theorem 7.6.6 in \cite{HJ}, we can not prove only the determinantal inequality of the parameterized Wasserstein mean, but also obtain the condition that the determinantal equality holds. Indeed, taking the map $f$ on the equation \eqref{E:Para-Wass-Eq} yields
\begin{displaymath}
\begin{split}
0 & = \log \det \left[ \sum_{j=1}^{n} w_{j} \left( A_{j}^{\frac{1-t}{t}} \#_{1-t} X^{-1} \right) \right] \\
& \geq \sum_{j=1}^{n} w_{j} \log \det \left( A_{j}^{\frac{1-t}{t}} \#_{1-t} X^{-1} \right)
= (1-t) \sum_{j=1}^{n} w_{j} \log \det A_{j} - (1-t) \log \det X,
\end{split}
\end{displaymath}
which we get the inequality by solving for $\det X$. Moreover, the equality of Theorem \ref{T:Para-Wass} (6) holds if and only if $A_{i}^{\frac{1-t}{t}} \#_{1-t} X^{-1} = A_{j}^{\frac{1-t}{t}} \#_{1-t} X^{-1}$ for all $i$ and $j$. By the definition of two-variable weighted geometric mean it is equivalent to $A_{i} = A_{j}$ for all $i$ and $j$.
\end{remark}

\begin{lemma} \label{L:identity}
Let $\omega = (w_{1}, \dots, w_{n}) \in \Delta_{n}$ and $\mathbb{A} = (A_{1}, \dots, A_{n}) \in \mathbb{P}_{m}^{n}$ with $0 < \alpha I \leq A_{j} \leq \beta I$ for all $j$ and some positive scalars $\alpha, \beta$. Then $\alpha I \leq \Omega_{t}(\omega; \mathbb{A}) \leq \beta I$ for any $1/2 \leq t < 1$.
\end{lemma}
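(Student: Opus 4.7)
The plan is to exploit the fixed-point characterization from Theorem~\ref{T:Para-Wass-Eq}: since $\Omega_t(\omega;\mathbb{A})$ is the unique positive definite fixed point of
\[
H(X)=\sum_{j=1}^{n} w_j\bigl(X^{1/2} A_j^{(1-t)/t} X^{1/2}\bigr)^t,
\]
it suffices to verify that $H$ carries the compact convex order interval $[\alpha I,\beta I]$ into itself. Brouwer's theorem will then produce a fixed point of $H$ inside $[\alpha I,\beta I]$, and the uniqueness half of Theorem~\ref{T:Para-Wass-Eq} forces that fixed point to coincide with $\Omega_t(\omega;\mathbb{A})$, delivering the claim.

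For the self-map step, I would fix $X\in[\alpha I,\beta I]$ and assemble $H(X)$ piece by piece. Since $(1-t)/t>0$ for $t\in(0,1)$, scalar monotonicity of $s\mapsto s^{(1-t)/t}$ converts $\alpha I\le A_j\le\beta I$ into $\alpha^{(1-t)/t}I\le A_j^{(1-t)/t}\le\beta^{(1-t)/t}I$. Conjugation by $X^{1/2}$ preserves the Loewner order, so
\[
\alpha^{(1-t)/t}\,X\le X^{1/2} A_j^{(1-t)/t} X^{1/2}\le\beta^{(1-t)/t}\,X.
\]
Now apply the map $Y\mapsto Y^t$, which is operator monotone because $t\in[1/2,1)\subset(0,1]$, to obtain
\[
\alpha^{1-t} X^t\le \bigl(X^{1/2} A_j^{(1-t)/t} X^{1/2}\bigr)^t\le\beta^{1-t} X^t.
\]
Taking the convex combination with weights $w_j$ and then using $\alpha^t I\le X^t\le\beta^t I$ (again available because $t\in(0,1]$) collapses the scalar factors and produces exactly $\alpha I\le H(X)\le\beta I$.

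I do not anticipate any serious obstacle here; the argument essentially unpacks the self-map property attributed to \cite{BJL19} just above the statement, and then combines it with Brouwer's theorem and the uniqueness half of Theorem~\ref{T:Para-Wass-Eq}. The hypothesis $1/2\le t<1$ is slightly stronger than what this proof actually requires, since the operator monotonicity of $Y\mapsto Y^t$ is valid on all of $(0,1]$; but it is the natural regime for the later sections, so there is no loss in stating the lemma this way.
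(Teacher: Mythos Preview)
Your argument is correct and uses exactly the same chain of Loewner inequalities as the paper: from $\alpha I\le A_j\le\beta I$ pass to $\alpha^{(1-t)/t}X\le X^{1/2}A_j^{(1-t)/t}X^{1/2}\le\beta^{(1-t)/t}X$ by congruence, then apply the operator monotone map $Y\mapsto Y^t$ and sum. The only difference is in the closing step: the paper applies these inequalities directly to the known fixed point $X=\Omega_t(\omega;\mathbb{A})$, obtaining $\alpha^{1-t}X^t\le X\le\beta^{1-t}X^t$, and then conjugates by $X^{-t/2}$ to read off $\alpha I\le X\le\beta I$ immediately---so there is no need to reinvoke Brouwer and uniqueness.
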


\begin{proof}
Assume that $0 < \alpha I \leq A_{j} \leq \beta I$ for all $j = 1, \dots, n$. Let $1/2 \leq t < 1$ and set $X = \Omega_{t}(\omega; \mathbb{A})$. Since the congruence transformation and the map $A \mapsto A^{r}$ for $r \in [0,1]$ preserve the Loewner order, we have $\alpha^{\frac{1-t}{t}} X \leq X^{1/2} A_{j}^{\frac{1-t}{t}} X^{1/2} \leq \beta^{\frac{1-t}{t}} X$, and $\alpha^{1-t} X^{t} \leq \left( X^{1/2} A_{j}^{\frac{1-t}{t}} X^{1/2} \right)^{t} \leq \beta^{1-t} X^{t}$. Then
\begin{displaymath}
\alpha^{1-t} X^{t} \leq \sum_{j=1}^{n} w_{j} \left( X^{1/2} A_{j}^{\frac{1-t}{t}} X^{1/2} \right)^{t} \leq \beta^{1-t} X^{t}.
\end{displaymath}
So $\alpha^{1-t} X^{t} \leq X \leq \beta^{1-t} X^{t}$ by Theorem \ref{T:Para-Wass-Eq}, and hence, $\alpha I \leq X \leq \beta I$.
\end{proof}

\section{Inequalities of parameterized Wasserstein means}

In the following we let $\mathbb{A} = (A_{1}, \dots, A_{n}) \in \mathbb{P}_{m}^{n}$ and $\omega = (w_{1}, \dots, w_{n}) \in \Delta_{n}$.

\begin{theorem}
For $t \in (0,1)$
\begin{displaymath}
\Vert \Omega_{t}(\omega; \mathbb{A}) \Vert \leq \left( \sum_{j=1}^{n} w_{j} \Vert A_{j} \Vert^{1-t} \right)^{\frac{1}{1-t}},
\end{displaymath}
where $\Vert \cdot \Vert$ denotes the operator norm.
\end{theorem}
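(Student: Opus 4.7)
The plan is to apply the operator norm to the fixed-point equation characterizing $\Omega_{t}(\omega;\mathbb{A})$ from Theorem \ref{T:Para-Wass-Eq} and exploit the fact that for positive semidefinite matrices the operator norm coincides with the largest eigenvalue, so it interacts cleanly with positive powers.

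Setting $X = \Omega_{t}(\omega;\mathbb{A})$, Theorem \ref{T:Para-Wass-Eq} gives
\begin{displaymath}
X = \sum_{j=1}^{n} w_{j}\bigl(X^{1/2} A_{j}^{\frac{1-t}{t}} X^{1/2}\bigr)^{t}.
\end{displaymath}
Since each summand on the right is positive semidefinite, the triangle inequality for the operator norm yields
\begin{displaymath}
\Vert X \Vert \leq \sum_{j=1}^{n} w_{j} \bigl\Vert \bigl(X^{1/2} A_{j}^{\frac{1-t}{t}} X^{1/2}\bigr)^{t} \bigr\Vert.
\end{displaymath}
For any positive semidefinite $Y$ and any $s>0$ we have $\Vert Y^{s} \Vert = \Vert Y \Vert^{s}$ because $\Vert \cdot \Vert$ is just the top eigenvalue. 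Applying this, together with submultiplicativity $\Vert X^{1/2} A_{j}^{(1-t)/t} X^{1/2} \Vert \leq \Vert X \Vert \cdot \Vert A_{j}^{(1-t)/t} \Vert = \Vert X \Vert \cdot \Vert A_{j} \Vert^{(1-t)/t}$, I obtain
\begin{displaymath}
\bigl\Vert \bigl(X^{1/2} A_{j}^{\frac{1-t}{t}} X^{1/2}\bigr)^{t} \bigr\Vert \leq \Vert X \Vert^{t}\, \Vert A_{j} \Vert^{1-t}.
\end{displaymath}

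Combining the last two displays gives
\begin{displaymath}
\Vert X \Vert \leq \Vert X \Vert^{t} \sum_{j=1}^{n} w_{j} \Vert A_{j} \Vert^{1-t},
\end{displaymath}
and dividing through by $\Vert X \Vert^{t}$ (which is positive) and raising to the power $1/(1-t)$ yields the claimed inequality.

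There is no real obstacle here: the argument is two lines once one recognizes that the operator-norm of a positive-definite power is the power of the norm, and that the fixed-point identity from Theorem \ref{T:Para-Wass-Eq} already packages $X$ as a convex combination of positive semidefinite matrices for which the triangle inequality is tight in the sense needed. The only mild subtlety is to ensure the scalar exponents line up: $t$ from the outer power and $(1-t)/t$ from the inside multiply to give exactly $1-t$ on $\Vert A_{j} \Vert$, while the $\Vert X \Vert$ contribution combines to $\Vert X \Vert^{t}$, which is what allows one to solve for $\Vert X \Vert^{1-t}$ at the end.
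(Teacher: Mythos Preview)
Your proof is correct and follows essentially the same approach as the paper: apply the operator norm to the fixed-point identity of Theorem~\ref{T:Para-Wass-Eq}, use the triangle inequality, the identity $\Vert Y^{s}\Vert = \Vert Y\Vert^{s}$ for positive semidefinite $Y$, and submultiplicativity, then solve for $\Vert X\Vert$.
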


\begin{proof}
Let $X = \Omega_{t}(\omega; \mathbb{A})$. Then by \eqref{E:equation}, by the triangle inequality for the operator norm, by the fact that $\Vert A^{t} \Vert = \Vert A \Vert^{t}$ for any $A \in \mathbb{P}_{m}$ and $t \geq 0$, and by the sub-multiplicativity for the operator norm in \cite[Section 5.6]{HJ}
\begin{displaymath}
\begin{split}
\Vert \Omega_{t}(\omega; \mathbb{A}) \Vert = \Vert X \Vert & = \left\| \sum_{j=1}^{n} w_{j} \left( X^{1/2} A_{j}^{\frac{1-t}{t}} X^{1/2} \right)^{t} \right\| \\
& \leq \sum_{j=1}^{n} w_{j} \left\Vert \left( X^{1/2} A_{j}^{\frac{1-t}{t}} X^{1/2} \right)^{t} \right\Vert
\leq \Vert X \Vert^{t} \left[ \sum_{j=1}^{n} w_{j} \Vert A_{j} \Vert^{1-t} \right].
\end{split}
\end{displaymath}
Hence, by simplification for $\Vert X \Vert$, we obtain the desired inequality.
\end{proof}

\begin{proposition} \label{P:arithmetic-Wass}
For $1/2 \leq t < 1$
\begin{displaymath}
\Omega_{t}(\omega; \mathbb{A})^{\frac{1-t}{t}} \leq \sum_{j=1}^{n} w_{j} A_{j}^{\frac{1-t}{t}}.
\end{displaymath}
\end{proposition}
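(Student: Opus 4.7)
My plan is to work directly from the second form of the fixed point equation in Theorem \ref{T:Para-Wass-Eq}. Set $X := \Omega_t(\omega; \mathbb{A})$, $r := (1-t)/t$, and introduce the auxiliary matrices
\begin{displaymath}
F_j := \bigl(X^{1/2} A_j^{r} X^{1/2}\bigr)^t, \qquad j = 1, \dots, n,
\end{displaymath}
so that $X = \sum_{j=1}^n w_j F_j$. The functional calculus inverts the $t$-th power, giving $F_j^{1/t} = X^{1/2} A_j^r X^{1/2}$, and therefore
\begin{displaymath}
A_j^r = X^{-1/2} F_j^{1/t} X^{-1/2}, \qquad \sum_{j=1}^n w_j A_j^r = X^{-1/2} \Bigl(\sum_{j=1}^n w_j F_j^{1/t}\Bigr) X^{-1/2}.
\end{displaymath}

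Next I would conjugate the target inequality $X^r \leq \sum_j w_j A_j^r$ by $X^{1/2}$, which is a congruence and hence preserves the Loewner order. Because $X$ and $X^r$ commute, the left-hand side becomes $X^{r+1} = X^{1/t}$, using the identity $1 + (1-t)/t = 1/t$; by the formula above the right-hand side becomes $\sum_j w_j F_j^{1/t}$. Substituting the fixed-point identity $X = \sum_j w_j F_j$ on the left, the statement to be proved collapses to the operator Jensen inequality
\begin{displaymath}
\Bigl(\sum_{j=1}^n w_j F_j\Bigr)^{1/t} \leq \sum_{j=1}^n w_j F_j^{1/t}
\end{displaymath}
for the scalar power function $\xi \mapsto \xi^{1/t}$ on $(0,\infty)$.

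The one real obstacle, and the precise spot where the hypothesis $t \geq 1/2$ is needed, is the operator convexity of this power function. By the Hansen-Pedersen characterization, $\xi \mapsto \xi^p$ is operator convex on $(0,\infty)$ exactly when $p \in [-1,0] \cup [1,2]$; the restriction $t \in [1/2, 1)$ places $1/t$ in $(1,2]$, which is precisely the operator-convex regime, and the argument genuinely breaks down for $t < 1/2$ since operator convexity then fails. Once operator convexity is in hand, Jensen's inequality for operator convex functions and positive weights summing to one yields the displayed inequality; conjugating the resulting $X^{1/t} \leq \sum_j w_j F_j^{1/t}$ back by $X^{-1/2}$ produces $X^{(1-t)/t} \leq \sum_j w_j A_j^{(1-t)/t}$, as required.
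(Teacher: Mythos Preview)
Your proof is correct and is essentially the same argument as the paper's: both start from the fixed-point identity $X=\sum_j w_j\bigl(X^{1/2}A_j^{(1-t)/t}X^{1/2}\bigr)^t$, apply the operator convexity of $\xi\mapsto\xi^{1/t}$ for $1/t\in(1,2]$ (this is exactly where $t\ge 1/2$ enters), and then conjugate by $X^{-1/2}$. The paper carries this out in a single line without introducing the auxiliary matrices $F_j$, but the content is identical.
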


\begin{proof}
Let $X = \Omega_{t}(\omega; \mathbb{A})$. Then $\displaystyle X = \sum_{j=1}^{n} w_{j} \left( X^{1/2} A_{j}^{\frac{1-t}{t}} X^{1/2} \right)^{t}$. Since the function $f(A) = A^{r}$ for $1 \leq r \leq 2$ is convex on $\mathbb{P}_{m}$ from \cite[Theorem 1.5.8]{Bh}, we have
\begin{displaymath}
\Omega_{t}(\omega; \mathbb{A})^{\frac{1}{t}} = X^{\frac{1}{t}} = \left[ \sum_{j=1}^{n} w_{j} \left( X^{1/2} A_{j}^{\frac{1-t}{t}} X^{1/2} \right)^{t} \right]^{\frac{1}{t}} \leq \sum_{j=1}^{n} w_{j} X^{1/2} A_{j}^{\frac{1-t}{t}} X^{1/2}.
\end{displaymath}
By the simple calculation we obtain the desired inequality.
\end{proof}

\begin{remark}
The arithmetic-Wasserstein mean inequality
\begin{displaymath}
\Omega_{1/2}(\omega; \mathbb{A}) \leq \sum_{j=1}^{n} w_{j} A_{j}
\end{displaymath}
has been already proved in \cite{BJL18}, and Proposition \ref{P:arithmetic-Wass} for $t = 1/2$ also yields the inequality.
\end{remark}

\begin{theorem} \label{T:Upper-Lower}
The parameterized Wasserstein mean has the following lower and upper bounds with respect to the Loewner order:
\begin{displaymath}
\frac{1}{1-t} I - \frac{t}{1-t} \sum_{j=1}^{n} w_{j} A_{j}^{\frac{t-1}{t}} \leq \Omega_{t}(\omega; \mathbb{A}) \leq \left[ \frac{1}{1-t} I - \frac{t}{1-t} \sum_{j=1}^{n} w_{j} A_{j}^{\frac{1-t}{t}} \right]^{-1},
\end{displaymath}
where the second inequality holds when $\displaystyle I - t \sum_{j=1}^{n} w_{j} A_{j}^{\frac{1-t}{t}}$ is invertible.
\end{theorem}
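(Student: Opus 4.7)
The plan is to work directly from the characterizing equation in Theorem \ref{T:Para-Wass-Eq}, namely $\sum_{j=1}^{n} w_{j}\bigl(A_{j}^{\frac{1-t}{t}} \#_{1-t} X^{-1}\bigr) = I$, where $X = \Omega_{t}(\omega; \mathbb{A})$, and squeeze the two-variable weighted geometric mean between the weighted arithmetic and weighted harmonic means (property (P10) specialized to two variables). Both bounds should come out of a single algebraic manipulation starting from this identity.

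For the upper bound, I will apply the arithmetic-geometric inequality $A \#_{s} B \leq (1-s)A + sB$ termwise with $A = A_{j}^{\frac{1-t}{t}}$, $B = X^{-1}$ and $s = 1-t$. Summing against $\omega$ and using that the weighted sum equals $I$ gives $I \leq t \sum_{j} w_{j} A_{j}^{\frac{1-t}{t}} + (1-t) X^{-1}$. Rearranging yields $X^{-1} \geq \frac{1}{1-t} I - \frac{t}{1-t}\sum_{j} w_{j} A_{j}^{\frac{1-t}{t}}$; provided the right-hand side is invertible (and hence positive definite, since one checks the inequality forces positivity in that case), taking inverses reverses the Loewner order and produces exactly the stated upper bound.

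For the lower bound, I will first apply the operator Jensen inequality (equivalently, the matrix AM--HM inequality) $\sum_{j} w_{j} B_{j}^{-1} \geq \bigl(\sum_{j} w_{j} B_{j}\bigr)^{-1}$ to $B_{j} = A_{j}^{\frac{1-t}{t}} \#_{1-t} X^{-1}$. Since the right-hand side is $I^{-1} = I$ and $B_{j}^{-1} = A_{j}^{\frac{t-1}{t}} \#_{1-t} X$ (from the identity $(A \#_{s} B)^{-1} = A^{-1} \#_{s} B^{-1}$), this delivers $\sum_{j} w_{j}\bigl(A_{j}^{\frac{t-1}{t}} \#_{1-t} X\bigr) \geq I$. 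Then one more application of the arithmetic-geometric inequality, now with $A = A_{j}^{\frac{t-1}{t}}$, $B = X$ and $s = 1-t$, bounds each term above by $t A_{j}^{\frac{t-1}{t}} + (1-t) X$; summing gives $I \leq t \sum_{j} w_{j} A_{j}^{\frac{t-1}{t}} + (1-t) X$, which rearranges to the claimed lower bound.

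The main conceptual obstacle is finding the \emph{right} chain of inequalities for the lower bound, since a naive one-step AM-GM from the original identity only produces an upper bound; the trick is the intermediate AM-HM step that flips the direction before AM-GM is applied. A minor technical point is justifying the inversion in the upper bound: the hypothesis that $I - t \sum_{j} w_{j} A_{j}^{\frac{1-t}{t}}$ is invertible, combined with the inequality $(1-t) X^{-1} \geq I - t \sum_{j} w_{j} A_{j}^{\frac{1-t}{t}}$ already established, forces this matrix to be positive definite, so the inversion is legitimate and order-reversing.
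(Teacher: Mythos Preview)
Your argument is correct and follows essentially the same route as the paper's: both start from the characterizing equation $\sum_{j} w_{j}\bigl(A_{j}^{(1-t)/t}\#_{1-t}X^{-1}\bigr)=I$ and sandwich the two-variable geometric mean between the arithmetic and harmonic means, with only a cosmetic reordering in the lower-bound half (the paper applies the GM--HM inequality termwise and then AM--HM on the resulting sum, whereas you apply AM--HM to the sum first, use the self-duality $(A\#_{s}B)^{-1}=A^{-1}\#_{s}B^{-1}$, and then AM--GM termwise). One small slip worth flagging: your claim that invertibility of $I-t\sum_{j} w_{j}A_{j}^{(1-t)/t}$ together with the inequality $(1-t)X^{-1}\ge I-t\sum_{j} w_{j}A_{j}^{(1-t)/t}$ \emph{forces} positive definiteness is not valid in general (an invertible Hermitian matrix dominated above by a positive definite one can still have negative eigenvalues); the paper does not justify this step either, and the upper bound should simply be read under the hypothesis that the bracketed expression is positive definite.
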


\begin{proof}
Let $X = \Omega_{t}(\omega; \mathbb{A})$. By the two-variable arithmetic-geometric-harmonic mean inequalities we have
\begin{displaymath}
\left[ t A_{j}^{-\frac{1-t}{t}} + (1-t) X \right]^{-1} \leq A_{j}^{\frac{1-t}{t}} \#_{1-t} X^{-1} \leq t A_{j}^{\frac{1-t}{t}} + (1-t) X^{-1}.
\end{displaymath}
Since the weighted sum is operator monotone,
\begin{displaymath}
\sum_{j=1}^{n} w_{j} \left[ t A_{j}^{-\frac{1-t}{t}} + (1-t) X \right]^{-1} \leq I = \sum_{j=1}^{n} w_{j} A_{j}^{\frac{1-t}{t}} \#_{1-t} X^{-1} \leq t \sum_{j=1}^{n} w_{j} A_{j}^{\frac{1-t}{t}} + (1-t) X^{-1}.
\end{displaymath}
Solving the second inequality for $X$, we obtain the upper bound for the parameterized Wasserstein mean. Taking inverse on both sides of the first inequality and applying the arithmetic-harmonic mean inequality, we have
\begin{displaymath}
t \sum_{j=1}^{n} w_{j} A_{j}^{-\frac{1-t}{t}} + (1-t) X \geq \left[ \sum_{j=1}^{n} w_{j} \left[ t A_{j}^{-\frac{1-t}{t}} + (1-t) X \right]^{-1} \right]^{-1} \geq I.
\end{displaymath}
Solving this for $X$, we obtain the lower bound for the parameterized Wasserstein mean.
\end{proof}

The Lie-Trotter-Kato product formula of two bounded operators is not fundamental only in various research areas such as Lie theory and operator algebra, but is also widely used for Gold-Thompson trace inequality and majorization problem. It has been extended in \cite{HK} to multi-variable cases in terms of the multi-variable operator mean, what we call the \emph{multivariate Lie-Trotter mean}. It has been proved that the multi-variable mean satisfying (P10) the arithmetic-geometric-harmonic mean inequalities is the multivariate Lie-Trotter mean. Even though the Wasserstein mean does not satisfy the Wasserstein-harmonic mean inequality, it has been proved by using another lower bound in \cite{HK19} that the Wasserstein mean is also the multivariate Lie-Trotter mean. 
As an application of Theorem \ref{T:Upper-Lower} we now show that the parameterized Wasserstein mean is the multivariate Lie-Trotter mean.
\begin{lemma} \label{L:suff}
For $\epsilon > 0$, let $\gamma : (-\epsilon, \epsilon) \to \mathbb{P}_{m}$ be a continuous map with $\gamma(0) = I$. Then for any $t \in (0,1)$ there exists a $\delta > 0$ such that $\gamma(s)^{\frac{1-t}{t}} > t I$ for all $s \in (-\delta, \delta)$.
\end{lemma}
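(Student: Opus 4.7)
The plan is to prove this by a direct continuity argument that exploits the openness of the positive definite cone $\mathbb{P}_{m}$ inside the real vector space $\mathbb{H}_{m}$. The key observation is that the desired strict inequality holds at $s = 0$ with strict slack, and remains true in a neighborhood of $0$ by continuity.

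First, I would evaluate the relevant expression at $s = 0$. Since $\gamma(0) = I$, the functional calculus gives $\gamma(0)^{\frac{1-t}{t}} = I$, so
\begin{displaymath}
\gamma(0)^{\frac{1-t}{t}} - tI = (1-t) I,
\end{displaymath}
and since $t \in (0,1)$, this matrix is positive definite. In particular, the strict inequality $\gamma(0)^{\frac{1-t}{t}} > tI$ holds at $s = 0$.

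Next, I would define the auxiliary map $\Phi : (-\epsilon, \epsilon) \to \mathbb{H}_{m}$ by $\Phi(s) = \gamma(s)^{\frac{1-t}{t}} - tI$ and argue that it is continuous. Continuity of $\gamma$ is given by hypothesis, and the power map $A \mapsto A^{\frac{1-t}{t}}$ is continuous on $\mathbb{P}_{m}$ (for instance, by continuous functional calculus, or by Weyl-type perturbation bounds for eigen-decompositions on $\mathbb{P}_{m}$). Hence $\Phi$ is a continuous $\mathbb{H}_{m}$-valued map.

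Finally, since $\mathbb{P}_{m}$ is an open subset of $\mathbb{H}_{m}$ and $\Phi(0) = (1-t) I \in \mathbb{P}_{m}$, the preimage $\Phi^{-1}(\mathbb{P}_{m})$ is an open neighborhood of $0$ in $(-\epsilon, \epsilon)$. Thus there exists $\delta > 0$ with $(-\delta, \delta) \subset \Phi^{-1}(\mathbb{P}_{m})$, which is exactly the statement $\gamma(s)^{\frac{1-t}{t}} > tI$ for all $s \in (-\delta, \delta)$. There is no genuine obstacle here: the lemma is really just the assertion that a strict Loewner inequality satisfied at a point is preserved under small continuous perturbations; the only point requiring a moment's care is citing continuity of the $r$-th power on $\mathbb{P}_{m}$.
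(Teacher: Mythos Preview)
Your argument is correct. It differs slightly from the paper's in that the paper works quantitatively: it chooses the explicit radius $r = 1 - t^{\frac{t}{1-t}}$, finds $\delta$ so that $\Vert \gamma(s) - I \Vert < r$ on $(-\delta,\delta)$, deduces the eigenvalue bound $\lambda_{i}(\gamma(s)) > t^{\frac{t}{1-t}}$ and hence $\gamma(s) > t^{\frac{t}{1-t}} I$, and then applies the power $\frac{1-t}{t}$ to obtain $\gamma(s)^{\frac{1-t}{t}} > tI$. Your route instead composes $\gamma$ with the power map first and then invokes the openness of $\mathbb{P}_{m}$ in $\mathbb{H}_{m}$; this is cleaner and avoids the explicit choice of $r$, at the cost of appealing to continuity of the power map on $\mathbb{P}_{m}$. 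The paper's version never needs that continuity (it only needs that $A > cI$ implies $A^{r} > c^{r} I$ via eigenvalues), whereas yours never needs this eigenvalue-monotonicity step. Both are entirely standard, so either proof is fine.
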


\begin{proof}
Let $t \in (0,1)$. Since $\gamma : (-\epsilon, \epsilon) \to \mathbb{P}_{m}$ is a continuous map with $\gamma(0) = I$, there exists a $\delta > 0$ such that $\gamma(s) \in B_{r} (I) = \{ A \in \mathbb{H}_{m}: \Vert A - I \Vert < r \}$ for all $s \in (-\delta, \delta)$, where $r := 1 - t^{\frac{t}{1-t}} > 0$. That is,
\begin{displaymath}
| \lambda_{i}(\gamma(s)) - 1| = | \lambda_{i}(\gamma(s) - I) | \leq \Vert \gamma(s) - I \Vert < r,
\end{displaymath}
since $\gamma(s) - I \in \mathbb{H}_{m}$, where $\lambda_{i}(A)$ denotes the $i$th eigenvalue of $A \in \mathbb{H}_{m}$ in decreasing order. It implies that $\lambda_{i}(\gamma(s)) > 1-r = t^{\frac{t}{1-t}}$, so $\gamma(s) > t^{\frac{t}{1-t}} I$. Thus, $\gamma(s)^{\frac{1-t}{t}} > t I$.
\end{proof}

\begin{theorem} \label{T:Lie-Trotter}
The parameterized Wasserstein mean satisfies
\begin{displaymath}
\lim_{s \to 0} \Omega_{t}(\omega; \gamma_{1}(s), \dots, \gamma_{n}(s))^{1/s} = \exp \left( \sum_{j=1}^{n} w_{j} \gamma_{j}'(0) \right),
\end{displaymath}
where for $\epsilon > 0$, $\gamma_{j}: (- \epsilon, \epsilon) \to \mathbb{P}_{m}$ are differentiable curves with $\gamma_{j}(0) = I$ for all $j = 1, \dots, n$.
\end{theorem}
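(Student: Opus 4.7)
The strategy is to sandwich $\Omega_{t}(\omega;\gamma_{1}(s),\dots,\gamma_{n}(s))$ between the Loewner bounds of Theorem~\ref{T:Upper-Lower}, to show that both bounds admit the same first-order expansion $I + sB + o(s)$ with $B := \sum_{j=1}^{n} w_{j}\gamma_{j}'(0)$, and to conclude via the standard identity $\lim_{s\to 0}(I + sB + o(s))^{1/s} = \exp(B)$.

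First I would record the basic expansion: since each $\gamma_{j}$ is differentiable at $0$ with $\gamma_{j}(0) = I$, continuous functional calculus (writing $\gamma_{j}(s)^{r} = \exp(r \log \gamma_{j}(s))$ and expanding both $\log$ and $\exp$) yields, for any fixed $r \in \mathbb{R}$,
\[
\gamma_{j}(s)^{r} = I + r s\, \gamma_{j}'(0) + o(s)
\]
in operator norm. Taking the weighted sum with $r = (1-t)/t$, a direct calculation gives
\[
I - t\sum_{j=1}^{n} w_{j}\gamma_{j}(s)^{(1-t)/t} = (1-t)(I - sB) + o(s),
\]
which is positive definite for $|s|$ small (by continuity, with Lemma~\ref{L:suff} providing a quantitative version). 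Inverting and simplifying,
\[
U(s) := \Bigl[\tfrac{1}{1-t}I - \tfrac{t}{1-t}\sum_{j=1}^{n} w_{j}\gamma_{j}(s)^{(1-t)/t}\Bigr]^{-1} = I + sB + o(s).
\]
An identical computation with $r = (t-1)/t$ shows that the lower bound
\[
L(s) := \tfrac{1}{1-t}I - \tfrac{t}{1-t}\sum_{j=1}^{n} w_{j}\gamma_{j}(s)^{(t-1)/t}
\]
also equals $I + sB + o(s)$.

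Setting $X(s) := \Omega_{t}(\omega;\gamma_{1}(s),\dots,\gamma_{n}(s))$, Theorem~\ref{T:Upper-Lower} gives $L(s) \leq X(s) \leq U(s)$, and therefore $0 \leq X(s) - L(s) \leq U(s) - L(s)$. Since the operator norm is monotone on positive semidefinite matrices,
\[
\|X(s) - L(s)\| \leq \|U(s) - L(s)\| = o(s),
\]
so $X(s) = I + sB + o(s)$ in operator norm. For $|s|$ sufficiently small, $X(s)$ lies in a neighborhood of $I$ on which the principal logarithm is analytic, giving $\log X(s) = (X(s) - I) + O(\|X(s) - I\|^{2}) = sB + o(s)$. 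Consequently $\tfrac{1}{s}\log X(s) \to B$, and continuity of the matrix exponential yields
\[
X(s)^{1/s} = \exp\!\Bigl(\tfrac{1}{s}\log X(s)\Bigr) \longrightarrow \exp(B) = \exp\!\Bigl(\sum_{j=1}^{n} w_{j}\gamma_{j}'(0)\Bigr),
\]
as claimed.

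The only delicate step is converting the Loewner sandwich $L(s) \leq X(s) \leq U(s)$ into a first-order operator-norm expansion for $X(s)$: this succeeds precisely because $L(s)$ and $U(s)$ share the \emph{same} linear term $sB$, so their difference is $o(s)$ and the squeeze pins $X(s)$ down to the same expansion. Once that expansion is established, the Lie-Trotter passage to $\exp(B)$ is routine.
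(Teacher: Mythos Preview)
Your proof is correct and follows essentially the same route as the paper: both use the Loewner sandwich of Theorem~\ref{T:Upper-Lower} and squeeze. The paper applies the operator-monotone logarithm directly to the Loewner inequalities and then computes the one-sided limits $\lim_{s\to 0^{\pm}}\tfrac{1}{s}\log L(s)$ and $\lim_{s\to 0^{\pm}}\tfrac{1}{s}\log U(s)$ (splitting into the cases $s>0$ and $s<0$ because division by $s$ reverses the inequalities), whereas you first extract the common first-order norm expansion $I+sB+o(s)$ for $L(s)$ and $U(s)$ and squeeze in norm, which lets you treat both signs of $s$ at once --- a purely cosmetic difference.
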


\begin{proof}
Let $\omega = (w_{1}, \dots, w_{n}) \in \Delta_{n}$ and let $\gamma_{1}, \dots, \gamma_{n} : (-\epsilon, \epsilon) \to \mathbb{P}_{m}$ be differentiable curves with $\gamma_{j}(0) = I$ for all $j$. By Lemma \ref{L:suff} there exists a sufficiently small $\delta > 0$ so that $\gamma_{j}(s)^{\frac{1-t}{t}} > t I$ for all $j$ and $s \in (-\delta, \delta)$. Then $\displaystyle \sum_{j=1}^{n} w_{j} \gamma_{j}(s)^{\frac{t-1}{t}} < \frac{1}{t} I$, and $\displaystyle I - t \sum_{j=1}^{n} w_{j} \gamma_{j}(s)^{\frac{t-1}{t}} > 0$ for any $s \in (-\delta, \delta)$.

By Theorem \ref{T:Upper-Lower}, we have
\begin{displaymath}
\frac{1}{1-t} I - \frac{t}{1-t} \sum_{j=1}^{n} w_{j} \gamma_{j}(s)^{\frac{t-1}{t}} \leq \Omega_{t}(\omega; \gamma_{1}(s), \dots, \gamma_{n}(s)) \leq \left[ \frac{1}{1-t} I - \frac{t}{1-t} \sum_{j=1}^{n} w_{j} \gamma_{j}(s)^{\frac{1-t}{t}} \right]^{-1}.
\end{displaymath}
Taking logarithms, using the operator monotonicity of the logarithm map, and multiplying all terms by $1/s$ for $s > 0$, we get
\begin{equation} \label{E:squeeze}
\begin{split}
\displaystyle \log \left[ \frac{1}{1-t} I - \frac{t}{1-t} \sum_{j=1}^{n} w_{j} \gamma_{j}(s)^{\frac{t-1}{t}} \right]^{1/s} & \leq \log \Omega(\omega; \gamma_{1}(s), \dots, \gamma_{n}(s))^{1/s} \\
& \leq \log \left[ \frac{1}{1-t} I - \frac{t}{1-t} \sum_{j=1}^{n} w_{j} \gamma_{j}(s)^{\frac{1-t}{t}} \right]^{-1/s}.
\end{split}
\end{equation}
Note that
\begin{displaymath}
\begin{split}
\displaystyle \lim_{s \to 0^{+}} \frac{1}{s} \log \left[ \frac{1}{1-t} I - \frac{t}{1-t} \sum_{j=1}^{n} w_{j} \gamma_{j}(s)^{\frac{t-1}{t}} \right] & = \sum_{j=1}^{n} w_{j} \gamma_{j}'(0), \\
\lim_{s \to 0^{+}} \frac{1}{s} \log \left[ \frac{1}{1-t} I - \frac{t}{1-t} \sum_{j=1}^{n} w_{j} \gamma_{j}(s)^{\frac{1-t}{t}} \right]^{-1} & = \sum_{j=1}^{n} w_{j} \gamma_{j}'(0).
\end{split}
\end{displaymath}
Taking the limit as $s \to 0^{+}$ in \eqref{E:squeeze}, we obtain
\begin{displaymath}
\displaystyle \lim_{s \to 0^{+}} \log \Omega(\omega; \gamma_{1}(s), \dots, \gamma_{n}(s))^{1/s} = \sum_{j=1}^{n} w_{j} \gamma_{j}'(0).
\end{displaymath}
Since the logarithm map $\log: \mathbb{P}_{m} \to \mathbb{H}_{m}$ is diffeomorphic, we get the desired identity. By the similar argument for $t < 0$, we obtain the conclusion.
\end{proof}

The notions of operator convexity and concavity are characterized by Jensen type inequalities in \cite{HP}. For every contraction $X$ we have
\begin{equation}
(X^{*} A X)^{r} \leq X^{*} A^{r} X \hspace{5mm} \textrm{if} \ \ 1 \leq r \leq 2,
\end{equation}
and
\begin{equation} \label{E:Hansen}
(X^{*} A X)^{r} \geq X^{*} A^{r} X \hspace{5mm} \textrm{if} \ \ 0 \leq r \leq 1.
\end{equation}
For $X \in GL_{m}$ such that its inverse $X^{-1}$ is a contraction,
\begin{equation} \label{E:Hansen-1}
(X^{*} A X)^{r} \leq X^{*} A^{r} X \hspace{5mm} \textrm{if} \ \ 0 \leq r \leq 1.
\end{equation}

\begin{theorem}
Let $t \in (0,1)$. Then
\begin{itemize}
\item[(1)] $\Omega_{t}(\omega; \mathbb{A}) \geq I$ implies $\displaystyle \mathcal{A}(\omega; A_{1}^{1-t}, \dots, A_{n}^{1-t}) \geq I$, and
\item[(2)] $\Omega_{t}(\omega; \mathbb{A}) \leq I$ implies $\Omega_{t}(\omega; \mathbb{A}) \leq \mathcal{H}(\omega; A_{1}^{t-1}, \dots, A_{n}^{t-1})$.
\end{itemize}
\end{theorem}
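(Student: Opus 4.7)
My plan is to attack both parts from the fixed point characterization of Theorem \ref{T:Para-Wass-Eq},
\begin{equation*}
X := \Omega_{t}(\omega; \mathbb{A}) = \sum_{j=1}^{n} w_{j} \left( X^{1/2} A_{j}^{\frac{1-t}{t}} X^{1/2} \right)^{t},
\end{equation*}
combined with the Jensen-type inequalities \eqref{E:Hansen} and \eqref{E:Hansen-1} applied with $r = t \in (0,1)$. The hypotheses $X \geq I$ and $X \leq I$ are exactly what convert $X^{-1/2}$, respectively $X^{1/2}$, into a contraction, which is the trigger for these Hansen inequalities. The key observation is that the inner factor $X^{1/2} A_{j}^{(1-t)/t} X^{1/2}$ is already in the right form for a Hansen bound with the outer matrix $X^{1/2}$ and the exponent $t$, and that $\bigl(A_{j}^{(1-t)/t}\bigr)^{t} = A_{j}^{1-t}$.

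For part (1), assume $X \geq I$. Then $X^{1/2} \geq I$, so $X^{-1/2} \leq I$ is a contraction, i.e.\ the inverse of $X^{1/2}$ is a contraction. Applying \eqref{E:Hansen-1} with $r = t$ termwise,
\begin{equation*}
\left( X^{1/2} A_{j}^{\frac{1-t}{t}} X^{1/2} \right)^{t} \leq X^{1/2} A_{j}^{1-t} X^{1/2},
\end{equation*}
and substituting into the fixed point equation gives $X \leq X^{1/2} \mathcal{A}(\omega; A_{1}^{1-t}, \dots, A_{n}^{1-t}) X^{1/2}$. Conjugating by $X^{-1/2}$ yields $I \leq \mathcal{A}(\omega; A_{1}^{1-t}, \dots, A_{n}^{1-t})$, as desired.

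For part (2), assume $X \leq I$, so that $X^{1/2}$ itself is a contraction. The inequality now reverses by \eqref{E:Hansen}:
\begin{equation*}
\left( X^{1/2} A_{j}^{\frac{1-t}{t}} X^{1/2} \right)^{t} \geq X^{1/2} A_{j}^{1-t} X^{1/2}.
\end{equation*}
Feeding this into the fixed point equation and conjugating by $X^{-1/2}$ gives $I \geq \mathcal{A}(\omega; A_{1}^{1-t}, \dots, A_{n}^{1-t})$. Taking inverses translates this into $\mathcal{H}(\omega; A_{1}^{t-1}, \dots, A_{n}^{t-1}) = \mathcal{A}(\omega; A_{1}^{1-t}, \dots, A_{n}^{1-t})^{-1} \geq I$, and chaining with the hypothesis $X \leq I$ produces $X \leq I \leq \mathcal{H}(\omega; A_{1}^{t-1}, \dots, A_{n}^{t-1})$.

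I do not anticipate any serious obstacle; the only point that requires care is the correct orientation of the Hansen inequality, i.e.\ that the role of the contraction is played by $X^{1/2}$ (part (2)) or that its inverse is the contraction (part (1)), both being ensured by operator monotonicity of the square root and of inversion on $\mathbb{P}_{m}$.
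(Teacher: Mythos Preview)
Your proposal is correct and follows essentially the same approach as the paper: both parts use the fixed point equation of Theorem~\ref{T:Para-Wass-Eq} together with the Hansen--Pedersen inequalities \eqref{E:Hansen} and \eqref{E:Hansen-1} applied with $r=t$ and contraction $X^{1/2}$ (respectively, expansion $X^{1/2}$), exactly as you outline. The only cosmetic difference is in part~(2): the paper first bounds $I\geq X=\sum_j w_j(\cdots)^t\geq X^{1/2}\bigl(\sum_j w_j A_j^{1-t}\bigr)X^{1/2}$ and conjugates to obtain $X^{-1}\geq \sum_j w_j A_j^{1-t}$ directly, whereas you conjugate first to get $I\geq \sum_j w_j A_j^{1-t}$ and then chain through the hypothesis $X\leq I$; both routes yield the stated inequality.
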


\begin{proof}
Let $t \in (0,1)$.
\begin{itemize}
\item[(1)] Assume that $X = \Omega_{t}(\omega; \mathbb{A}) \geq I$. Then $X^{-1} \leq I$, and by \eqref{E:Hansen-1}
\begin{displaymath}
\left( X^{1/2} A_{j}^{\frac{1-t}{t}} X^{1/2} \right)^{t} \leq X^{1/2} A_{j}^{1-t} X^{1/2}.
\end{displaymath}
Thus, by \eqref{E:equation} and the above inequality
\begin{displaymath}
I = \sum_{j=1}^{n} w_{j} X^{-1/2} \left( X^{1/2} A_{j}^{\frac{1-t}{t}} X^{1/2} \right)^{t} X^{-1/2} \leq \sum_{j=1}^{n} w_{j} A_{j}^{1-t}.
\end{displaymath}

\item[(2)] Assume that $X = \Omega_{t}(\omega; \mathbb{A}) \leq I$. Then by \eqref{E:equation} and \eqref{E:Hansen}
\begin{displaymath}
I \geq \sum_{j=1}^{n} w_{j} \left( X^{1/2} A_{j}^{\frac{1-t}{t}} X^{1/2} \right)^{t} \geq X^{1/2} \left( \sum_{j=1}^{n} w_{j} A_{j}^{1-t} \right) X^{1/2},
\end{displaymath}
so $\displaystyle X^{-1} \geq \sum_{j=1}^{n} w_{j} A_{j}^{1-t}$. Thus, we obtain (2) by taking inverse on both sides.
\end{itemize}
\end{proof}



For $1 \leq i, j \leq n$ let $A_{ij} \in M_{m}$, the set of all $m \times m$ matrices with entries in the field of complex numbers. We define a map $\Phi: M_{n}(M_{m}) \to M_{m}$ as
\begin{equation} \label{E:Phi}
\Phi \left(
\left[
  \begin{array}{ccc}
    A_{11} & \cdots & A_{1n} \\
    \vdots & \ddots & \vdots \\
    A_{n1} & \cdots & A_{nn} \\
  \end{array}
\right] \right) = \sum_{j=1}^{n} w_{j} A_{jj}.
\end{equation}
Then one can easily see that $\Phi$ is a positive linear and unital map.

\begin{theorem} \label{T:Kantorovich}
Let $\omega = (w_{1}, \dots, w_{n}) \in \Delta_{n}$. Let $\mathfrak{M}^{\omega} = \mathfrak{M}(\omega; \cdot): \mathbb{P}_{m}^{n} \to \mathbb{P}_{m}$ be the map satisfying the inequality
\begin{equation} \label{E:bound-harmonic}
\mathfrak{M}(\omega; A_{1}, \dots, A_{n}) \geq \mathcal{H}(\omega; A_{1}, \dots, A_{n}).
\end{equation}
If there exist positive scalars $\alpha$ and $\beta$ such that $0 < \alpha I \leq A_{j} \leq \beta I$ for all $j$, then
\begin{displaymath}
\mathfrak{M} \left( \omega; X^{-1} \#_{t} A_{1}^{\frac{1-t}{t}}, \dots, X^{-1} \#_{t} A_{n}^{\frac{1-t}{t}} \right) \geq \frac{4 \alpha \beta}{(\alpha + \beta)^{2}} I
\end{displaymath}
for any $1/2 \leq t < 1$, where $X = \Omega_{t}(\omega; A_{1}, \dots, A_{n})$.
\end{theorem}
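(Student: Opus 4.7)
The plan is to reduce the claim to the operator Kantorovich inequality applied to the positive unital linear map $\Phi$ in \eqref{E:Phi}.

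First, setting $C_{j} := X^{-1} \#_{t} A_{j}^{\frac{1-t}{t}}$ and using the standard symmetry $A \#_{s} B = B \#_{1-s} A$ of the two-variable weighted geometric mean, the characterization of $X = \Omega_{t}(\omega; \mathbb{A})$ in Theorem \ref{T:Para-Wass-Eq} rewrites as
\begin{displaymath}
\sum_{j=1}^{n} w_{j} C_{j} \;=\; \sum_{j=1}^{n} w_{j} \left( A_{j}^{\frac{1-t}{t}} \#_{1-t} X^{-1} \right) \;=\; I.
\end{displaymath}
Consequently, if $\mathbf{C} := \operatorname{diag}(C_{1}, \dots, C_{n}) \in M_{n}(M_{m})$, then $\Phi(\mathbf{C}) = I$.

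Next, I need uniform scalar bounds on the $C_{j}$. The assumption $\alpha I \leq A_{j} \leq \beta I$, combined with Lemma \ref{L:identity} (this is where the restriction $1/2 \leq t < 1$ is used), gives
\begin{displaymath}
\beta^{-1} I \leq X^{-1} \leq \alpha^{-1} I, \qquad \alpha^{\frac{1-t}{t}} I \leq A_{j}^{\frac{1-t}{t}} \leq \beta^{\frac{1-t}{t}} I.
\end{displaymath}
Joint monotonicity of $\#_{t}$ and the scalar rule $(aI)\#_{t}(bI) = a^{1-t} b^{t} I$ yield $m I \leq C_{j} \leq M I$, where $m := (\alpha/\beta)^{1-t}$ and $M := (\beta/\alpha)^{1-t}$, so in particular $mM = 1$.

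Now I apply the operator Kantorovich inequality $\Phi(A^{-1}) \leq \frac{(m+M)^{2}}{4mM} \Phi(A)^{-1}$ to the positive unital map $\Phi$ and the positive element $\mathbf{C}$:
\begin{displaymath}
\sum_{j=1}^{n} w_{j} C_{j}^{-1} \;=\; \Phi(\mathbf{C}^{-1}) \;\leq\; \frac{(m+M)^{2}}{4mM}\, \Phi(\mathbf{C})^{-1} \;=\; \frac{(m+M)^{2}}{4} I.
\end{displaymath}
Writing $r := \alpha/\beta \in (0, 1]$, the required comparison $(m+M)^{2}/4 \leq (\alpha+\beta)^{2}/(4\alpha\beta)$ reduces to $r^{1-t} + r^{-(1-t)} \leq r^{1/2} + r^{-1/2}$, which holds since $p \mapsto r^{p} + r^{-p}$ is monotone increasing on $(0, \infty)$ for $r \in (0, 1]$ and $1 - t \leq 1/2$.

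Finally, the hypothesis \eqref{E:bound-harmonic} and the order-reversing property of the matrix inverse give
\begin{displaymath}
\mathfrak{M}(\omega; C_{1}, \dots, C_{n}) \;\geq\; \mathcal{H}(\omega; C_{1}, \dots, C_{n}) \;=\; \left( \sum_{j=1}^{n} w_{j} C_{j}^{-1} \right)^{-1} \;\geq\; \frac{4 \alpha \beta}{(\alpha + \beta)^{2}} I,
\end{displaymath}
which is the claim. The main obstacle is setting things up so that the Kantorovich inequality can be invoked cleanly: one must recognize the identity $\Phi(\mathbf{C}) = I$ coming from Theorem \ref{T:Para-Wass-Eq} via the $\#$-symmetry, and then verify the auxiliary scalar inequality that upgrades the ``intrinsic'' Kantorovich constant $(m+M)^{2}/4$ to the looser $(\alpha+\beta)^{2}/(4\alpha\beta)$ appearing in the statement; everything else is routine.
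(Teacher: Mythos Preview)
Your proof is correct and rests on the same core idea as the paper's: apply the operator Kantorovich inequality to the positive unital map $\Phi$ of \eqref{E:Phi} acting on the block-diagonal matrix built from the fixed-point equation in Theorem~\ref{T:Para-Wass-Eq}, then finish via the hypothesis \eqref{E:bound-harmonic}.

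The difference lies in the normalization. The paper applies Kantorovich to $D_{j}=(X^{1/2}A_{j}^{(1-t)/t}X^{1/2})^{t}=X^{1/2}C_{j}X^{1/2}$, which lie in $[\alpha I,\beta I]$; this produces the constant $(\alpha+\beta)^{2}/(4\alpha\beta)$ directly, at the cost of a final congruence by $X^{-1/2}$. You instead work with $C_{j}=X^{-1}\#_{t}A_{j}^{(1-t)/t}$ themselves, which satisfy $\sum_{j}w_{j}C_{j}=I$ and lie in $[mI,MI]$ with $mM=1$; this yields the sharper Kantorovich constant $(m+M)^{2}/4=\bigl((\alpha/\beta)^{1-t}+(\beta/\alpha)^{1-t}\bigr)^{2}/4$, and you then need the auxiliary scalar inequality $r^{1-t}+r^{-(1-t)}\le r^{1/2}+r^{-1/2}$ (for $r=\alpha/\beta$ and $1-t\le 1/2$) to relax to the stated bound. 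So your route trades the paper's congruence step for a scalar estimate, and as a byproduct shows that the conclusion actually holds with the tighter constant $4/(m+M)^{2}$.
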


\begin{proof}
For some positive scalars $\alpha$ and $\beta$ such that $0 < \alpha I \leq A_{j} \leq \beta I$ for all $j$, we have that $\alpha I \leq X = \Omega_{t}(\omega; A_{1}, \dots, A_{n}) \leq \beta I$ for $1/2 \leq t < 1$ by Lemma \ref{L:identity}, and $\alpha I \leq (X^{1/2} A_{j}^{\frac{1-t}{t}} X^{1/2})^{t} \leq \beta I$ for all $j$. So
\begin{displaymath}
\alpha I \leq
\left[
  \begin{array}{ccc}
    (X^{1/2} A_{1}^{\frac{1-t}{t}} X^{1/2})^{t} & \cdots & O \\
    \vdots & \ddots & \vdots \\
    O & \cdots & (X^{1/2} A_{n}^{\frac{1-t}{t}} X^{1/2})^{t} \\
  \end{array}
\right] \leq \beta I.
\end{displaymath}
Applying Proposition 2.7.8 in \cite{Bh} to the positive linear map $\Phi$, we obtain
\begin{displaymath}
\begin{split}
& \Phi \left(
\left[
  \begin{array}{ccc}
    (X^{1/2} A_{1}^{\frac{1-t}{t}} X^{1/2})^{t} & \cdots & O \\
    \vdots & \ddots & \vdots \\
    O & \cdots & (X^{1/2} A_{n}^{\frac{1-t}{t}} X^{1/2})^{t} \\
  \end{array}
\right] \right) \\
& \leq \frac{(\alpha + \beta)^{2}}{4 \alpha \beta}
\Phi \left(
\left[
  \begin{array}{ccc}
    (X^{1/2} A_{1}^{\frac{1-t}{t}} X^{1/2})^{t} & \cdots & O \\
    \vdots & \ddots & \vdots \\
    O & \cdots & (X^{1/2} A_{n}^{\frac{1-t}{t}} X^{1/2})^{t} \\
  \end{array}
\right]^{-1} \right)^{-1}.
\end{split}
\end{displaymath}
Equivalently, by Theorem \ref{T:Para-Wass-Eq}
\begin{displaymath}
X = \sum_{j=1}^{n} w_{j} (X^{1/2} A_{j}^{\frac{1-t}{t}} X^{1/2})^{t} \leq \frac{(\alpha + \beta)^{2}}{4 \alpha \beta} \left[ \sum_{j=1}^{n} w_{j} (X^{1/2} A_{j}^{\frac{1-t}{t}} X^{1/2})^{-t} \right]^{-1}.
\end{displaymath}
Taking the congruence transformation by $X^{-1/2}$ on both sides and applying the inequality \eqref{E:bound-harmonic}, we obtain
\begin{displaymath}
\begin{split}
I & \leq
\frac{(\alpha + \beta)^{2}}{4 \alpha \beta} X^{-1/2} \left[ \sum_{j=1}^{n} w_{j} (X^{-1/2} A_{j}^{-\frac{1-t}{t}} X^{-1/2})^{t} \right]^{-1} X^{-1/2} \\
& = \frac{(\alpha + \beta)^{2}}{4 \alpha \beta} \left[ \sum_{j=1}^{n} w_{j} X \#_{t} A_{j}^{-\frac{1-t}{t}} \right]^{-1} \\
& \leq \frac{(\alpha + \beta)^{2}}{4 \alpha \beta} \mathfrak{M} \left( \omega; X^{-1} \#_{t} A_{1}^{\frac{1-t}{t}}, \dots, X^{-1} \# A_{n}^{\frac{1-t}{t}} \right).
\end{split}
\end{displaymath}
\end{proof}

\section{Log-Majorization}

Let $\mathbf{x} = (x_{1}, \dots, x_{m})$ and $\mathbf{y} = (y_{1}, \dots, y_{m})$ be two $m$-tuples of nonnegative numbers. Let $x_{1}^{\downarrow} \geq x_{2}^{\downarrow} \geq \cdots \geq x_{m}^{\downarrow}$ be the decreasing rearrangement of $x_{1}, \dots, x_{m}$. If for all $1 \leq k \leq m$
\begin{displaymath}
\prod_{j=1}^{k} x_{j}^{\downarrow} \leq \prod_{j=1}^{k} y_{j}^{\downarrow},
\end{displaymath}
then we say that $\mathbf{x}$ is weakly log-majorized by $\mathbf{y}$, and write it as $\displaystyle \mathbf{x} \prec_{w \log} \mathbf{y}$. In addition, if the equality holds for $k = m$, then we say that $\mathbf{x}$ is log-majorized by $\mathbf{y}$, and write it as $\displaystyle \mathbf{x} \prec_{\log} \mathbf{y}$.

A standard technique in the theory of log majorization is the use of antisymmetric tensor powers. For $1 \leq k \leq m$ we denote by $Q_{k,m}$ the set of multi-indices $\alpha = (\alpha_{1}, \dots, \alpha_{k})$ with $1 \leq \alpha_{1} < \cdots < \alpha_{k} \leq m$. Let $A \in M_{m}$ and let $\alpha, \beta \in Q_{k,m}$. Then $A[\alpha | \beta]$ denotes the matrix obtained from $A$ by picking its entries from the rows corresponding to $\alpha$ and the columns corresponding to $\beta$. Recall that $\Lambda^{k}$ is a map assigning each $A \in M_{m}$ to an
$\left(
   \begin{array}{c}
     m \\
     k \\
   \end{array}
 \right)
 \times
 \left(
   \begin{array}{c}
     m \\
     k \\
   \end{array}
 \right)$
matrix $\Lambda^{k} A$ whose $(\alpha, \beta)$th entry for $\alpha, \beta \in Q_{k,m}$ is given by $\det A[\alpha | \beta]$, where the elements of $Q_{k,m}$ are ordered by the lexicographic ordering (or the dictionary order).
There are interesting properties for the antisymmetric tensor powers of positive matrix. Note that $\Lambda^{k} (c I) = c^{k} I$ for any constant $c$, where $I$ is the identity matrix with certain dimension, and
\begin{displaymath}
\prod_{j=1}^{k} \lambda_{j}^{\downarrow}(A) = \lambda_{1}^{\downarrow} (\Lambda^{k} A), \ \ 1 \leq k \leq m.
\end{displaymath}
The map $\mathbb{P}_{m} \ni A \mapsto \Lambda^{k} A$ is multiplicative, that is,
\begin{center}
$\Lambda^{k} (AB) = (\Lambda^{k} A) (\Lambda^{k} B)$ \ and \ $(\Lambda^{k} A)^{r} = \Lambda^{k} A^{r}, \ r \in (-\infty, \infty)$.
\end{center}
So it is clear that $\Lambda^{k} (A \#_{t} B) = (\Lambda^{k} A) \#_{t} (\Lambda^{k} B)$ for any $A, B \in \mathbb{P}_{m}$ and $t \in [0,1]$, and moreover, it can be extended to the symmetric weighted geometric means $\mathfrak{M}$ such as the ALM (Ando-Li-Mathias) mean, BMP (Bini-Meini-Poloni) mean, and Cartan mean $G$:
\begin{equation} \label{E:tensor power}
\Lambda^{k} \mathfrak{M}(\omega; A_{1}, \dots, A_{n}) = \mathfrak{M}(\omega; \Lambda^{k} A_{1}, \dots, \Lambda^{k} A_{n}).
\end{equation}

It has been shown in \cite{HK} that the map $\mathfrak{M}(\omega; \cdot): \mathbb{P}_{m}^{n} \to \mathbb{P}_{m}$ satisfying (P10) the arithmetic-geometric-harmonic weighted mean inequalities for given $\omega = (w_{1}, \dots, w_{n}) \in \Delta_{n}$ is the multivariate Lie-Trotter mean, as an extended version of the Lie-Trotter-Kato formula:
\begin{displaymath}
\lim_{s \to 0} \mathfrak{M}(\omega; \gamma_{1}(s), \dots, \gamma_{n}(s))^{1/s} = \exp \left( \sum_{j=1}^{n} w_{j} \gamma_{j}'(0) \right),
\end{displaymath}
where for $\epsilon > 0$, $\gamma_{j}: (-\epsilon, \epsilon) \to \mathbb{P}_{m}$ are any differentiable curves with $\gamma_{j}(0) = I$ for all $j$. In particular, taking $\gamma_{j}(s) = A_{j}^{s}$ for each $A_{j} \in \mathbb{P}_{m}$ we obtain
\begin{lemma} \label{L:Lie-Trotter-Kato}
Let the map $\mathfrak{M}(\omega; \cdot): \mathbb{P}_{m}^{n} \to \mathbb{P}_{m}$ satisfy (P10) the arithmetic-geometric-harmonic weighted mean inequalities. Then for given $\mathbb{A} = (A_{1}, \dots, A_{n}) \in \mathbb{P}_{m}^{n}$ and $\omega = (w_{1}, \dots, w_{n}) \in \Delta_{n}$
\begin{equation}
\lim_{s \to 0} \mathfrak{M}(\omega; A_{1}^{s}, \dots, A_{n}^{s})^{1/s} = \exp \left( \sum_{j=1}^{n} w_{j} \log A_{j} \right),
\end{equation}
where $\displaystyle L(\omega; \mathbb{A}) = \exp \left( \sum_{j=1}^{n} w_{j} \log A_{j} \right)$ is the log-Euclidean mean.
\end{lemma}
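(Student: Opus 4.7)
The plan is simply to specialize the multivariate Lie-Trotter mean identity, recalled just above the lemma from \cite{HK}, to the one-parameter curves $\gamma_j(s) = A_j^s$. Since the hypothesis on $\mathfrak{M}$ is precisely (P10), the stated formula
\[
\lim_{s \to 0} \mathfrak{M}(\omega; \gamma_1(s), \dots, \gamma_n(s))^{1/s} = \exp\left( \sum_{j=1}^{n} w_j \gamma_j'(0) \right)
\]
is available for any differentiable curves $\gamma_j : (-\epsilon, \epsilon) \to \mathbb{P}_m$ with $\gamma_j(0) = I$.

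First I would verify that the curves $\gamma_j(s) := A_j^s = \exp(s \log A_j)$ satisfy the hypotheses of the cited formula. They are real-analytic (hence differentiable) maps from $\mathbb{R}$ into $\mathbb{P}_m$ since each $A_j$ is positive definite, and $\gamma_j(0) = A_j^0 = I$. Using the functional calculus, $\gamma_j'(s) = (\log A_j) \exp(s \log A_j)$, so $\gamma_j'(0) = \log A_j$. Substituting these into the Lie-Trotter formula yields
\[
\lim_{s \to 0} \mathfrak{M}(\omega; A_1^s, \dots, A_n^s)^{1/s} = \exp\left( \sum_{j=1}^{n} w_j \log A_j \right),
\]
which is exactly the claimed identity.

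There is essentially no obstacle here; the lemma is a direct corollary of the general multivariate Lie-Trotter mean result for (P10)-means, obtained by choosing the canonical geodesic curves $s \mapsto A_j^s$ through the identity. Accordingly, the proof will consist only of the two-line verification above, followed by invocation of the cited formula.
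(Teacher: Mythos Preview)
Your proposal is correct and matches the paper's own approach exactly: the paper states the lemma as an immediate specialization of the multivariate Lie--Trotter formula from \cite{HK} by taking $\gamma_j(s)=A_j^s$, which is precisely what you do (with the added verification that $\gamma_j(0)=I$ and $\gamma_j'(0)=\log A_j$).
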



\begin{theorem}
Let $\mathbb{A} = (A_{1}, \dots, A_{n}) \in \mathbb{P}_{m}^{n}$ and $\omega = (w_{1}, \dots, w_{n}) \in \Delta_{n}$. For $t \in (0,1)$,
\begin{displaymath}
\lambda(G(\omega; \mathbb{A})) \prec_{\log} \lambda(L(\omega; \mathbb{A})) \prec_{w \log} \lambda(\Omega_{t}(\omega; \mathbb{A})).
\end{displaymath}
\end{theorem}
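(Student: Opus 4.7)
The theorem splits into two independent log-majorization inequalities, and I would attack each via the antisymmetric tensor power technique. A recurring tool is the identity $\Lambda^k L(\omega;\mathbb{A}) = L(\omega; \Lambda^k A_1, \dots, \Lambda^k A_n)$, which follows because $\Lambda^k$ carries the spectral decomposition of each positive matrix $A_j$ to one whose eigenvalues are products of $k$ eigenvalues of $A_j$; in particular $\log \Lambda^k A_j$ shares eigenvalues with the induced derivation of $\log A_j$ on $\Lambda^k(\mathbb{C}^m)$, which is additive, and $\exp$ of the weighted sum of these derivations equals $\Lambda^k L$.

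For $\lambda(G(\omega;\mathbb{A})) \prec_{\log} \lambda(L(\omega;\mathbb{A}))$, applying $\Lambda^k$ and using both \eqref{E:tensor power} for $G$ and the identity above for $L$ reduces $\|\Lambda^k G\| \leq \|\Lambda^k L\|$ (for each $k$) to the single operator-norm inequality $\|G(\omega;\mathbb{B})\| \leq \|L(\omega;\mathbb{B})\|$. This is immediate from Yamazaki's characterization (Theorem \ref{T:Yamazaki}) by rescaling: setting $\alpha = \|L(\omega;\mathbb{B})\| = \lambda_{\max}(L(\omega;\mathbb{B}))$, one has $\sum w_j \log(B_j/\alpha) = \log L(\omega;\mathbb{B}) - (\log \alpha) I \leq 0$, so Theorem \ref{T:Yamazaki} forces $G(\omega; \mathbb{B}/\alpha) \leq I$. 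Since (P9) and direct computation give $\det G(\omega;\mathbb{A}) = \det L(\omega;\mathbb{A}) = \prod_j(\det A_j)^{w_j}$, the weak log-majorization is promoted to $\prec_{\log}$.

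For $\lambda(L(\omega;\mathbb{A})) \prec_{w\log} \lambda(\Omega_t(\omega;\mathbb{A}))$, invoke the Lie-Trotter-Kato formula of Theorem \ref{T:Lie-Trotter} with the curves $\gamma_j(s) = A_j^s$ (so $\gamma_j(0) = I$ and $\gamma_j'(0) = \log A_j$):
\begin{equation*}
L(\omega;\mathbb{A}) = \lim_{s \to 0^+} \Omega_t(\omega; A_1^s, \dots, A_n^s)^{1/s}.
\end{equation*}
Apply $\Lambda^k$ to both sides, use the identity $\Lambda^k L = L(\omega;\Lambda^k A_1, \dots, \Lambda^k A_n)$, and take operator norms to obtain
\begin{equation*}
\|\Lambda^k L(\omega;\mathbb{A})\| = \lim_{s \to 0^+} \|\Lambda^k \Omega_t(\omega; A_1^s, \dots, A_n^s)\|^{1/s}.
\end{equation*}
The theorem then reduces to an Ando-Hiai-type monotonicity, namely, for every $k$ and every $0 < s \leq 1$,
\begin{equation*}
\|\Lambda^k \Omega_t(\omega; A_1^s, \dots, A_n^s)\|^{1/s} \leq \|\Lambda^k \Omega_t(\omega;\mathbb{A})\|,
\end{equation*}
which, on passing $s \to 0^+$, gives $\|\Lambda^k L\| \leq \|\Lambda^k \Omega_t\|$ for every $k$ and hence the weak log-majorization.

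The principal obstacle is this Ando-Hiai-type monotonicity inequality for $\Omega_t$. The difficulty is that the defining equation of $\Omega_t$ in Theorem \ref{T:Para-Wass-Eq} mixes $X$ on both sides of the weighted sum $\sum w_j (X^{1/2} A_j^{(1-t)/t} X^{1/2})^t$, so unlike for $G$ or $L$, the multiplicative map $\Lambda^k$ cannot be pushed through $\Omega_t$ directly. The natural route is to combine Araki's log-majorization $\lambda((A^{1/2} B A^{1/2})^r) \prec_{\log} \lambda(A^{r/2} B^r A^{r/2})$ applied to the self-map $H_s(X) = \sum w_j (X^{1/2} A_j^{s(1-t)/t} X^{1/2})^t$, operator monotonicity of the $t$-power for $t \in (0,1)$, and the sub-multiplicativity $\|\Lambda^k(AB)\| \leq \|\Lambda^k A\|\,\|\Lambda^k B\|$, in order to control the $s$-dependence of the fixed points; the bounds of Theorem \ref{T:Upper-Lower} and the norm inequality of Section 4 should also be useful for pinning down the base case $k = 1$.
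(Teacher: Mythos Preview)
Your treatment of $\lambda(G(\omega;\mathbb{A})) \prec_{\log} \lambda(L(\omega;\mathbb{A}))$ is fine; the paper simply cites \cite{BJL} for this, and your rescaling argument via Theorem~\ref{T:Yamazaki} is a valid self-contained alternative.

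For $\lambda(L(\omega;\mathbb{A})) \prec_{w\log} \lambda(\Omega_t(\omega;\mathbb{A}))$, however, there is a genuine gap. Your reduction hinges entirely on the Ando--Hiai-type inequality
\[
\|\Lambda^k \Omega_t(\omega; A_1^s,\dots,A_n^s)\|^{1/s} \leq \|\Lambda^k \Omega_t(\omega;\mathbb{A})\|, \qquad 0<s\le 1,
\]
which you yourself flag as the ``principal obstacle'' and do not prove. The ingredients you list (Araki's log-majorization, Loewner--Heinz, the bounds in Theorem~\ref{T:Upper-Lower}) do not obviously assemble into a proof: Araki controls the eigenvalues of a single expression $(X^{1/2}A^{r}X^{1/2})^t$ but says nothing about how the \emph{fixed point} of the self-map $H_s$ depends on $s$, and $\Omega_t$ is not known to commute with $\Lambda^k$, so even the case $k=1$ does not automatically propagate. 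No Ando--Hiai property for $\Omega_t$ is established anywhere in the paper, and given that $\Omega_t$ already fails joint homogeneity, congruence invariance, and the harmonic-mean lower bound, there is no a priori reason to expect one.

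The paper avoids this obstacle entirely by routing through an auxiliary symmetric weighted geometric mean $\mathfrak{M}$ (e.g.\ the Cartan mean) rather than through $\Omega_t$ itself. From the defining equation $I=\sum_j w_j\bigl(A_j^{(1-t)/t}\#_{1-t}X^{-1}\bigr)$ and operator concavity of the $s$-th power one gets $\mathfrak{M}\bigl(\omega; (A_j^{(1-t)/t}\#_{1-t}X^{-1})^s\bigr)\le I$; applying $\Lambda^k$ (which \emph{does} commute with $\mathfrak{M}$ by \eqref{E:tensor power}) and assuming $\Lambda^k X\le I$ lets one replace $\Lambda^k X^{-1}$ by $I$ inside the $\#_{1-t}$, yielding $\Lambda^k \mathfrak{M}(\omega; A_1^{(1-t)s},\dots,A_n^{(1-t)s})\le I$. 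Now the Lie--Trotter limit is taken for $\mathfrak{M}$ (Lemma~\ref{L:Lie-Trotter-Kato}), not for $\Omega_t$, and one concludes $\Lambda^k L(\omega;\mathbb{A})\le I$. Homogeneity of both $L$ and $\Omega_t$ then converts the implication ``$\Lambda^k\Omega_t\le I \Rightarrow \Lambda^k L\le I$'' into the desired eigenvalue inequality. The point is that the Ando--Hiai step is needed only for $\mathfrak{M}$, where it is built into (P10) and \eqref{E:tensor power}, never for $\Omega_t$.
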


\begin{proof}
The first log-majorization $\lambda(G(\omega; \mathbb{A})) \prec_{\log} \lambda(L(\omega; \mathbb{A}))$ has been proved in \cite{BJL}.

Let $X = \Omega_{t}(\omega; \mathbb{A})$. Then $\displaystyle I = \sum_{j=1}^{n} w_{j} \left( A_{j}^{\frac{1-t}{t}} \#_{1-t} X^{-1} \right)$. Since the function $f(A) = A^{s}$ for $0 < s < 1$ is operator concave on $\mathbb{P}_{m}$ from \cite[Theorem 4.2.3]{Bh},
\begin{displaymath}
\sum_{j=1}^{n} w_{j} \left( A_{j}^{\frac{1-t}{t}} \#_{1-t} X^{-1} \right)^{s} \leq \left[ \sum_{j=1}^{n} w_{j} \left( A_{j}^{\frac{1-t}{t}} \#_{1-t} X^{-1} \right) \right]^{s} = I.
\end{displaymath}
For the symmetric weighted geometric mean $\mathfrak{M}$ satisfying the monotonicity, (P10) and \eqref{E:tensor power}, we have
\begin{displaymath}
\mathfrak{M} \left( \omega; \left( A_{1}^{\frac{1-t}{t}} \#_{1-t} X^{-1} \right)^{s}, \dots, \left( A_{n}^{\frac{1-t}{t}} \#_{1-t} X^{-1} \right)^{s} \right) \leq I,
\end{displaymath}
and moreover,
\begin{equation} \label{E:supplement}
\mathfrak{M} \left( \omega; \left( \Lambda^{k} A_{1}^{\frac{1-t}{t}} \#_{1-t} \Lambda^{k} X^{-1} \right)^{s}, \dots, \left( \Lambda^{k} A_{n}^{\frac{1-t}{t}} \#_{1-t} \Lambda^{k} X^{-1} \right)^{s} \right) \leq I.
\end{equation}

Assume that $\Lambda^{k} X \leq I$. Then $\Lambda^{k} X^{-1} \geq I$, so
\begin{displaymath}
\Lambda^{k} A_{j}^{\frac{1-t}{t}} \#_{1-t} \Lambda^{k} X^{-1} \geq \Lambda^{k} A_{j}^{\frac{1-t}{t}} \#_{1-t} I = \left( \Lambda^{k} A_{j}^{\frac{1-t}{t}} \right)^{t} = \Lambda^{k} A_{j}^{1-t}.
\end{displaymath}
By the Loewner-Heinz inequality, it implies that for $0 < s < 1$
\begin{displaymath}
\left( \Lambda^{k} A_{j}^{\frac{1-t}{t}} \#_{1-t} \Lambda^{k} X^{-1} \right)^{s}
\geq \left( \Lambda^{k} A_{j}^{1-t} \right)^{s} = \Lambda^{k} A_{j}^{(1-t) s}.
\end{displaymath}
Applying the monotonicity and \eqref{E:tensor power} of the mean $\mathfrak{M}$ to \eqref{E:supplement}, we have
\begin{displaymath}
\Lambda^{k} \mathfrak{M} (\omega; A_{1}^{(1-t) s}, \dots, A_{n}^{(1-t) s}) = \mathfrak{M} \left( \omega; \Lambda^{k} A_{1}^{(1-t) s}, \dots, \Lambda^{k} A_{n}^{(1-t) s} \right) \leq I.
\end{displaymath}
Taking $\frac{1}{(1-t) s}$ power on both sides yields
\begin{displaymath}
\Lambda^{k} \mathfrak{M} (\omega; A_{1}^{(1-t) s}, \dots, A_{n}^{(1-t) s})^{\frac{1}{(1-t) s}} \leq I.
\end{displaymath}
Letting $s \to 0$ and using Lemma \ref{L:Lie-Trotter-Kato}, we obtain that $\Lambda^{k} L(\omega; \mathbb{A}) \leq I$.

We have shown that for $1 \leq k < m$, $\Lambda^{k} \Omega_{t}(\omega; \mathbb{A}) \leq I$ implies $\Lambda^{k} L(\omega; \mathbb{A}) \leq I$. This yields that $\lambda_{1}^{\downarrow} (\Lambda^{k} L(\omega; \mathbb{A})) \leq \lambda_{1}^{\downarrow} (\Lambda^{k} \Omega_{t}(\omega; \mathbb{A}))$, that is,
\begin{displaymath}
\prod_{j=1}^{k} \lambda_{j}^{\downarrow}(L(\omega; \mathbb{A})) \leq \prod_{j=1}^{k} \lambda_{j}^{\downarrow}(\Omega_{t}(\omega; \mathbb{A})).
\end{displaymath}
From the determinantal inequality of parameterized Wasserstein mean in Theorem \ref{T:Para-Wass} (6), we can see that the above inequality still holds for $k = m$. Hence, the log-Euclidean mean $L(\omega; \mathbb{A})$ is weakly log-majorized by the parameterized Wasserstein mean $\Omega_{t}(\omega; \mathbb{A})$.
\end{proof}

The following shows the weak log-majorization between the Cartan mean of $p (\in (0,1))$ powers of given positive definite matrices and the $p$ power of parameterized Wasserstein mean of given positive definite matrices.
\begin{theorem}
Let $\mathbb{A} = (A_{1}, \dots, A_{n}) \in \mathbb{P}_{m}^{n}$ and $\omega = (w_{1}, \dots, w_{n}) \in \Delta_{n}$. For $t \in (0,1)$,
\begin{displaymath}
\lambda(G(\omega; A_{1}^{1-t}, \dots, A_{n}^{1-t})) \prec_{w \log} \lambda(\Omega_{t}(\omega; \mathbb{A}))^{1-t},
\end{displaymath}
where $\lambda(A)^{r} := (\lambda_{1}^{r}(A), \dots, \lambda_{m}^{r}(A))$ for any $A \in \mathbb{P}_{m}$ and $r \in \mathbb{R}$.
\end{theorem}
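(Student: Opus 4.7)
My plan is to mimic the antisymmetric-tensor-power strategy used in the previous theorem, but to replace the appeal to the log-Euclidean mean by a direct invocation of the Cartan mean via (P10) and the tensor-power identity \eqref{E:tensor power}. Recall that the weak log-majorization $\lambda(Y) \prec_{w\log} \lambda(Z)$ for $Y, Z \in \mathbb{P}_{m}$ is equivalent to the family of norm inequalities $\bigl\Vert \Lambda^{k} Y \bigr\Vert \leq \bigl\Vert \Lambda^{k} Z \bigr\Vert$ for $1 \leq k \leq m$, since $\prod_{j=1}^{k} \lambda_{j}^{\downarrow}(A) = \lambda_{1}^{\downarrow}(\Lambda^{k} A) = \Vert \Lambda^{k} A \Vert$. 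Setting $X := \Omega_{t}(\omega; \mathbb{A})$, the goal reduces to showing
\[
\bigl\Vert \Lambda^{k} G(\omega; A_{1}^{1-t}, \dots, A_{n}^{1-t}) \bigr\Vert \leq \bigl\Vert \Lambda^{k} X^{1-t} \bigr\Vert \quad (1 \leq k \leq m).
\]

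Next I will exploit scaling. Under $A_{j} \mapsto c A_{j}$ the right-hand side scales by $c^{k(1-t)}$ (using Theorem \ref{T:Para-Wass} (2) for $X$ together with $\Lambda^{k}(cA) = c^{k} \Lambda^{k} A$ and $(cA)^{1-t} = c^{1-t} A^{1-t}$), while the left-hand side scales by the same factor (since $(cA_{j})^{1-t} = c^{1-t} A_{j}^{1-t}$ and $G$ is positively homogeneous). So I may normalize and assume $\Vert \Lambda^{k} X^{1-t} \Vert = 1$, equivalently $\Lambda^{k} X \leq I$, i.e., $\Lambda^{k} X^{-1} \geq I$. Starting from \eqref{E:Para-Wass-Eq}, $\sum_{j} w_{j} (A_{j}^{(1-t)/t} \#_{1-t} X^{-1}) = I$, I apply (P10) for the Cartan mean to get
\[
G\bigl(\omega;\, A_{1}^{(1-t)/t} \#_{1-t} X^{-1},\, \dots,\, A_{n}^{(1-t)/t} \#_{1-t} X^{-1}\bigr) \leq I,
\]
then pass to $\Lambda^{k}$ using \eqref{E:tensor power} together with $\Lambda^{k}(A \#_{s} B) = (\Lambda^{k} A) \#_{s} (\Lambda^{k} B)$ and $\Lambda^{k} A^{r} = (\Lambda^{k} A)^{r}$, obtaining
\[
G\bigl(\omega;\, \Lambda^{k} A_{1}^{(1-t)/t} \#_{1-t} \Lambda^{k} X^{-1},\, \dots,\, \Lambda^{k} A_{n}^{(1-t)/t} \#_{1-t} \Lambda^{k} X^{-1}\bigr) \leq I.
\]

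The closing move uses $\Lambda^{k} X^{-1} \geq I$ together with monotonicity of $A \#_{1-t} B$ in the second slot to estimate each entry below by $\Lambda^{k} A_{j}^{(1-t)/t} \#_{1-t} I = (\Lambda^{k} A_{j}^{(1-t)/t})^{t} = \Lambda^{k} A_{j}^{1-t}$; monotonicity (P4) of $G$ and one more application of \eqref{E:tensor power} then give
\[
\Lambda^{k} G(\omega; A_{1}^{1-t}, \dots, A_{n}^{1-t}) = G(\omega; \Lambda^{k} A_{1}^{1-t}, \dots, \Lambda^{k} A_{n}^{1-t}) \leq I,
\]
which is the normalized inequality I need. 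Undoing the rescaling yields the asserted weak log-majorization for every $1 \leq k \leq m$. The main subtlety, as I anticipate it, is aligning the two homogeneity exponents correctly so the normalization step is legitimate; once that bookkeeping is pinned down, the rest is a clean assembly of (P4), (P10), and the tensor-power calculus, with no need to invoke operator concavity of $A \mapsto A^{s}$ as was done in the preceding theorem.
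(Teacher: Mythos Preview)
Your argument is correct and in fact cleaner than the paper's own proof. Both proofs share the same overall architecture---show that $\Lambda^{k}\Omega_{t}(\omega;\mathbb{A})\leq I$ forces $\Lambda^{k}G(\omega;A_{1}^{1-t},\dots,A_{n}^{1-t})\leq I$, then undo a homogeneous rescaling---but differ in two places. First, to pass from the identity $\sum_{j}w_{j}\bigl(A_{j}^{(1-t)/t}\#_{1-t}X^{-1}\bigr)=I$ to $G(\omega;\dots)\leq I$, the paper invokes operator concavity of $\log$ together with the Yamazaki characterization (Theorem~\ref{T:Yamazaki}); you simply use the arithmetic--geometric inequality (P10), which is more direct. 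Second, to drop from $\Lambda^{k}A_{j}^{(1-t)/t}\#_{1-t}\Lambda^{k}X^{-1}$ down to $\Lambda^{k}A_{j}^{1-t}$ under the hypothesis $\Lambda^{k}X^{-1}\geq I$, the paper performs a congruence by $(\Lambda^{k}X)^{1/2}$, applies the Hansen--Pedersen inequality \eqref{E:Hansen}, and then conjugates back; you instead use monotonicity of $A\#_{s}B$ in $B$ and the identity $C\#_{1-t}I=C^{t}$, bypassing \eqref{E:Hansen} entirely. Your route therefore avoids both Theorem~\ref{T:Yamazaki} and the Hansen--Pedersen machinery, relying only on (P4), (P10), and the tensor-power calculus; the trade-off is that the paper's use of Theorem~\ref{T:Yamazaki} illustrates that characterization in action, whereas your version is shorter but less illustrative of that tool. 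Your scaling bookkeeping is also fine: both sides scale by $c^{k(1-t)}$, so normalizing to $\lVert\Lambda^{k}X\rVert=1$ (equivalently $\Lambda^{k}X\leq I$) is legitimate, and the argument covers all $1\leq k\leq m$ uniformly without a separate appeal to the determinantal inequality.
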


\begin{proof}
Let $X = \Omega_{t}(\omega; \mathbb{A})$. Then $\displaystyle I = \sum_{j=1}^{n} w_{j} \left( A_{j}^{\frac{1-t}{t}} \#_{1-t} X^{-1} \right)$. Since the logarithmic function $\log: \mathbb{P}_{m} \to \mathbb{H}_{m}$ is operator concave by Exercise 4.2.5 in \cite{Bh}, we have
\begin{displaymath}
0 = \log \left[ \sum_{j=1}^{n} w_{j} \left( A_{j}^{\frac{1-t}{t}} \#_{1-t} X^{-1} \right) \right] \geq \sum_{j=1}^{n} w_{j} \log \left( A_{j}^{\frac{1-t}{t}} \#_{1-t} X^{-1} \right).
\end{displaymath}
By Theorem \ref{T:Yamazaki} $G(\omega; A_{1}^{\frac{1-t}{t}} \#_{1-t} X^{-1}, \dots, A_{n}^{\frac{1-t}{t}} \#_{1-t} X^{-1}) \leq I$, and by the multiplicativity of antisymmetric tensor power and \eqref{E:tensor power}
\begin{equation} \label{E:majorization}
G \left( \omega; (\Lambda^{k} A_{1})^{\frac{1-t}{t}} \#_{1-t} (\Lambda^{k} X)^{-1}, \dots, (\Lambda^{k} A_{n})^{\frac{1-t}{t}} \#_{1-t} (\Lambda^{k} X)^{-1} \right) \leq I.
\end{equation}

Assume that $\Lambda^{k} X \leq I$ for $1 \leq k \leq m$. Taking the congruence transformation by $(\Lambda^{k} X)^{1/2}$ on both sides of \eqref{E:majorization} and applying \eqref{E:Hansen} yield
\begin{displaymath}
\begin{split}
\Lambda^{k} X & \geq G \left( \omega; ((\Lambda^{k} X)^{1/2} (\Lambda^{k} A_{1})^{\frac{1-t}{t}} (\Lambda^{k} X)^{1/2})^{t}, \dots, ((\Lambda^{k} X)^{1/2} (\Lambda^{k} A_{n})^{\frac{1-t}{t}} (\Lambda^{k} X)^{1/2})^{t} \right) \\
& \geq G \left( \omega; (\Lambda^{k} X)^{1/2} (\Lambda^{k} A_{1})^{1-t} (\Lambda^{k} X)^{1/2}, \dots, (\Lambda^{k} X)^{1/2} (\Lambda^{k} A_{n})^{1-t} (\Lambda^{k} X)^{1/2} \right).
\end{split}
\end{displaymath}
Taking the congruence transformation by $(\Lambda^{k} X)^{-1/2}$ on both sides implies
\begin{displaymath}
I \geq G \left( \omega; (\Lambda^{k} A_{1})^{1-t}, \dots, (\Lambda^{k} A_{n})^{1-t} \right) \\
= \Lambda^{k} G(\omega; A_{1}^{1-t}, \dots, A_{n}^{1-t}).
\end{displaymath}

We have shown that for $1 \leq k \leq m$, $\Lambda^{k} \Omega_{t}(\omega; \mathbb{A}) \leq I$ implies that $\Lambda^{k} G(\omega; A_{1}^{1-t}, \dots, A_{n}^{1-t}) \leq I$. Let $\alpha = \lambda_{1}^{\downarrow} (\Lambda^{k} \Omega_{t}(\omega; \mathbb{A}))^{1/k}$. Then by the homogeneity of parameterized Wasserstein mean in Theorem \ref{T:Para-Wass} (2)
\begin{displaymath}
\Lambda^{k} \Omega_{t} \left( \omega; \frac{1}{\alpha} \mathbb{A} \right) = \left( \Lambda^{k} \frac{1}{\alpha} I \right) \Lambda^{k} \Omega_{t} = \frac{1}{\alpha^{k}} \Lambda^{k} \Omega_{t} = \frac{1}{\lambda_{1}^{\downarrow} (\Lambda^{k} \Omega_{t})} \Lambda^{k} \Omega_{t} \leq I.
\end{displaymath}
It implies that
\begin{displaymath}
\begin{split}
I & \geq \Lambda^{k} G \left( \omega; \left( \frac{1}{\alpha} A_{1} \right)^{1-t}, \dots, \left( \frac{1}{\alpha} A_{n} \right)^{1-t} \right) \\
& = \left( \Lambda^{k} \frac{1}{\alpha^{1-t}} I \right) \Lambda^{k} G(\omega; A_{1}^{1-t}, \dots, A_{n}^{1-t}) \\
& = \frac{1}{\lambda_{1}^{\downarrow} (\Lambda^{k} \Omega_{t}(\omega; \mathbb{A}))^{1-t}} \Lambda^{k} G(\omega; A_{1}^{1-t}, \dots, A_{n}^{1-t}),
\end{split}
\end{displaymath}
that is, $\Lambda^{k} G(\omega; A_{1}^{1-t}, \dots, A_{n}^{1-t}) \leq \lambda_{1}^{\downarrow} (\Lambda^{k} \Omega_{t}(\omega; \mathbb{A}))^{1-t} I$. Thus,
\begin{displaymath}
\lambda_{1}^{\downarrow} \left( \Lambda^{k} G(\omega; A_{1}^{1-t}, \dots, A_{n}^{1-t}) \right) \leq \lambda_{1}^{\downarrow} \left( \Lambda^{k} \Omega_{t}(\omega; \mathbb{A}) \right)^{1-t}.
\end{displaymath}
By the determinantal inequality of parameterized Wasserstein mean in Theorem \ref{T:Para-Wass} (6), we obtain the weak log-majorization between $G(\omega; A_{1}^{1-t}, \dots, A_{n}^{1-t})$ and $\Omega_{t}(\omega; \mathbb{A})^{1-t}$.
\end{proof}

\vspace{4mm}

\textbf{Acknowledgement} \\

This work was supported by the National Research Foundation of Korea (NRF) grant funded by the Korea government (No. NRF-2018R1C1B6001394).

\end{document}